\let\csname ver@natbib.sty\endcsname\relax
\def\BState{\State\hskip-\ALG@thistlm}
\newtheorem{theorem}{Theorem}[section]
\newtheorem{lemma}[theorem]{Lemma}
\newtheorem{corollary}[theorem]{Corollary}
\begin{document}

\markboth{Edgar Dobriban}
{Efficient Computation of Limit Spectra}

%
%


\title{\large \sc Efficient Computation of Limit Spectra of Sample Covariance Matrices} \author{ \normalsize Edgar Dobriban\thanks{Department of Statistics, Stanford University, Stanford, CA, USA. e-mail: \href{mailto:dobriban@stanford.edu}{dobriban@stanford.edu}}}
\maketitle


\begin{abstract}
Consider an $n \times p$ data matrix $X$ whose rows are independently sampled from a population with covariance $\Sigma$. When $n,p$ are both large, the eigenvalues of the sample covariance matrix are substantially different from those of the true covariance. Asymptotically, as $n,p \to \infty$ with $p/n \to \gamma$, there is a deterministic mapping from the population spectral distribution (PSD) to the empirical spectral distribution (ESD) of the eigenvalues. The mapping is characterized by a fixed-point equation for the Stieltjes transform. 

We propose a new method to compute numerically the output ESD from an arbitrary input PSD. Our method, called \textsc{Spectrode}, finds the support and the density of the ESD to high precision; we prove this for finite discrete distributions. In computational experiments it outperforms existing methods by several orders of magnitude in speed and accuracy.  We apply  \textsc{Spectrode} to compute expectations and contour integrals of the ESD. These quantities are often central in applications of random matrix theory (RMT).

We illustrate that \textsc{Spectrode} is directly useful in statistical problems, such as estimation and hypothesis testing for covariance matrices. Our proposal may make it more convenient to use asymptotic RMT in aspects of high-dimensional data analysis.
\end{abstract}


\section{Introduction}

Large data matrices are now commonly analyzed in science and engineering. Models from random matrix theory (RMT) are becoming increasingly used to understand the behavior of popular statistical methods on such matrices. RMT is particularly applicable to analyze statistical methods which depend on the sample covariance matrix of the data: for instance principal component analysis (PCA), classification, hypothesis testing of high-dimensional means, and independence tests, see e.g. the monographs  \cite{tulino2004random, serdobolskii2007multiparametric, couillet2011random, yao2015large}. 

Concretely, consider an $n \times p$ matrix $\mathbf{X}$, whose rows ${\bf x}_i$ are independent and identically distributed random vectors. Suppose that ${\bf x}_i$ are mean zero, and their covariance matrix is the $p \times p$ matrix ${\bf\Sigma} = \mathbb{E}{{\bf x}_i {\bf x}^\top_i}$. To estimate ${\bf\Sigma}$, we form the sample covariance matrix $\widehat{{\bf\Sigma}} = n^{-1} \mathbf{X}^\top \mathbf{X}.$ In the asymptotic model classically used in statistics, when $p$ is fixed and $n \to \infty$, the sample covariance matrix is a good estimator of the population covariance \cite{anderson1958introduction}. 

However, if $n$ and $p$ are of comparable size, then $\widehat{{\bf\Sigma}}$ deviates substantially from the true covariance. The asymptotic theory of random matrices describes the behavior of the eigenvalues of $\widehat{{\bf\Sigma}}$ as $n,p$ grow large proportionally, see \cite{bai2009spectral}. If the distribution of the eigenvalues of ${\bf\Sigma}$ tends to a limit population spectral distribution (PSD) as $n,p \to \infty$ and the aspect ratio $p/n \to \gamma$, then under mild conditions the random eigenvalue distribution of $\widehat{{\bf\Sigma}}$ also tends to a deterministic limit empirical spectral distribution (ESD) \cite{marchenko1967distribution,silverstein1995strong}.

The ``fundamental theorem of applied statistics'' is the statement that often the limit of the empirical distribution function is the population distribution. This theorem applies in numerous settings, see e.g. \cite{serfling2009approximation}, but not here. When $n \to \infty$ but $p/n \to \gamma>0$, the limit empirical spectrum differs from the true spectrum, because the number of samples is only a constant multiple of the dimension. This is very different from the case where $p$ is fixed and $n \to \infty$, in which case the sample spectrum converges to the true spectrum. The difference between the population and empirical eigenvalues has fundamental implications for high-dimensional statistical inference, see e.g. \cite{johnstone2007high, yao2015large}.  It becomes central to high-dimensional statistical analysis to understand the relationship between the population and sample eigenvalues. This understanding should help adjust classical statistical methods to the high-dimensional setting.

However, the relationship between the population and sample spectrum is complex, implicit and non-linear; it is described by a fixed-point equation - often called the Marchenko-Pastur equation or Silverstein equation - for the Stieltjes transform of the limit ESD. As a consequence, the ESD is not available in closed form, except for very special cases. The implicit description of the sample spectrum can be somewhat hard to understand, as well as hard to use in any practical setting, including data analysis. 

Reliable, precise, and efficient computational methods are needed  to understand the relationship between the population and sample eigenvalues. Perhaps surprisingly, research focused on delivering robust software tools for numerically computing large classes of limit ESDs has received relatively little attention. While there are important contributions to related problems (see Section \ref{relatedwork}), none of them are fully suitable for our problem.

The main method for computing the limit ESD is a fixed-point algorithm (\textsc{FPA}) which directly iterates the Silverstein equation. Since the algorithm is immediately suggested by the fixed-point characterization of the ESD, the history of this algorithmic approach is apparently lost in the prehistory of the subject. \textsc{FPA} has appeared recently in various forms, e.g. \cite{belinschi2007new, hachem2007deterministic, couillet2011deterministic,yao2012note,hendrikse2013smooth}. Further, \textsc{FPA} is recommended as the default method for computing the ESD in the monograph \cite{yao2015large}. \textsc{FPA} is a good method for computing the density of the ESD at a single point. However, usually the density of the ESD must be computed on a dense grid on the real line. When this is the case, we show that \textsc{FPA} is inefficient for high-precision computations. 

We propose the new method \textsc{Spectrode} to compute the limit empirical spectrum  of covariance matrices from the limit population spectrum.  \textsc{Spectrode} improves on \textsc{FPA} by exploiting the smoothness of the ESD.  We show in computational experiments on dense grids that our new method is dramatically faster and more accurate than FPA and other methods. For instance, on natural test problems in Section \ref{timing} below,  \textsc{Spectrode} is up to 1000 times faster than \textsc{FPA} while achieving the same accuracy! Finally, for atomic PSDs, i.e. weighted mixtures of point masses, we prove its convergence to the correct answer. 

\textsc{Spectrode} is publicly available in an open source \textsc{Matlab} implementation at \url{https://github.com/dobriban/eigenedge}. This software package also has the code to reproduce all computational results of our paper (see Section \ref{software}).

In the remainder of the introduction, we showcase example computations with \textsc{Spectrode}, and highlight the key aspects of the method. Then we state its properties more precisely.

\subsection{Two examples}

\begin{figure}
\centering
\begin{minipage}{.5\textwidth}
  \centering
  \includegraphics[scale=0.33]
  {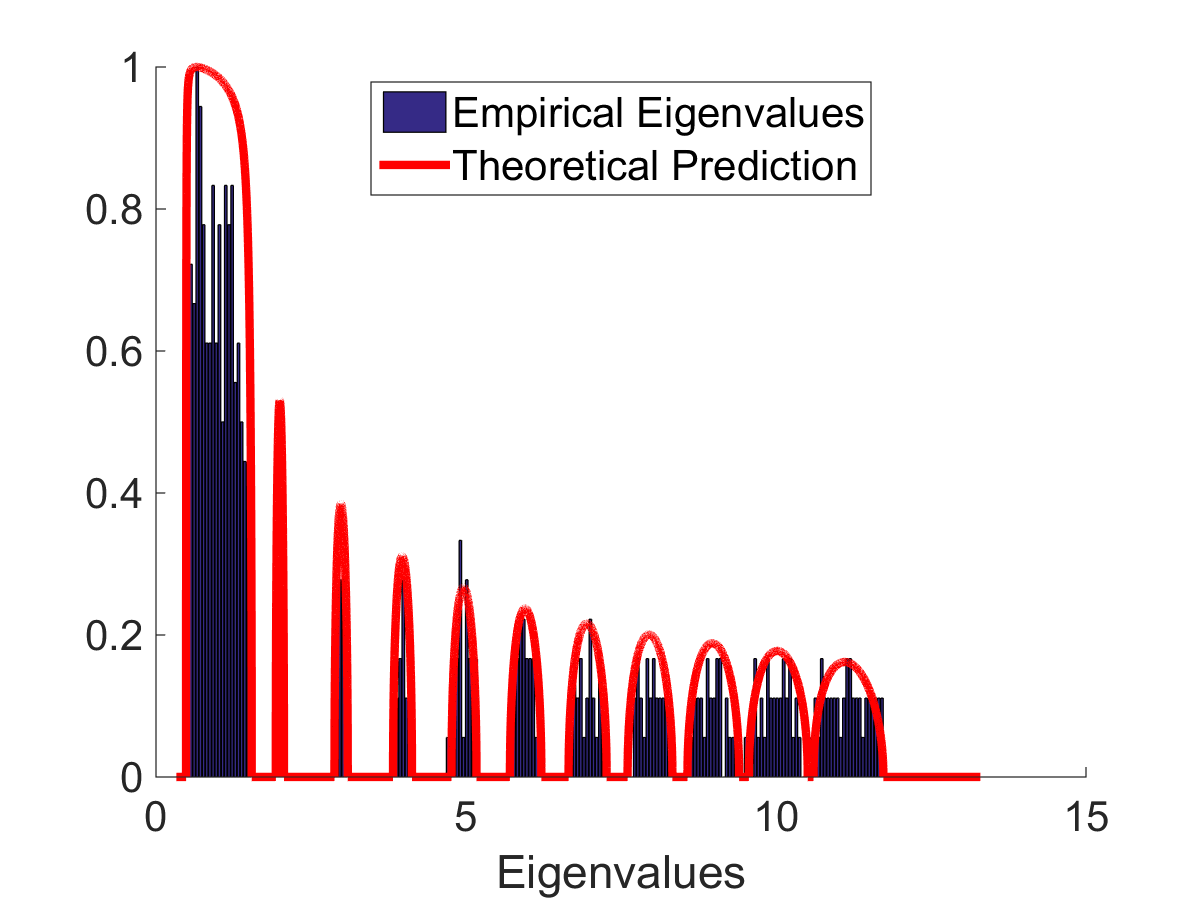} \\{\footnotesize (a) Density of limit ESD.}
\end{minipage}%
\begin{minipage}{.5\textwidth}
  \centering
  \includegraphics[scale=0.33]{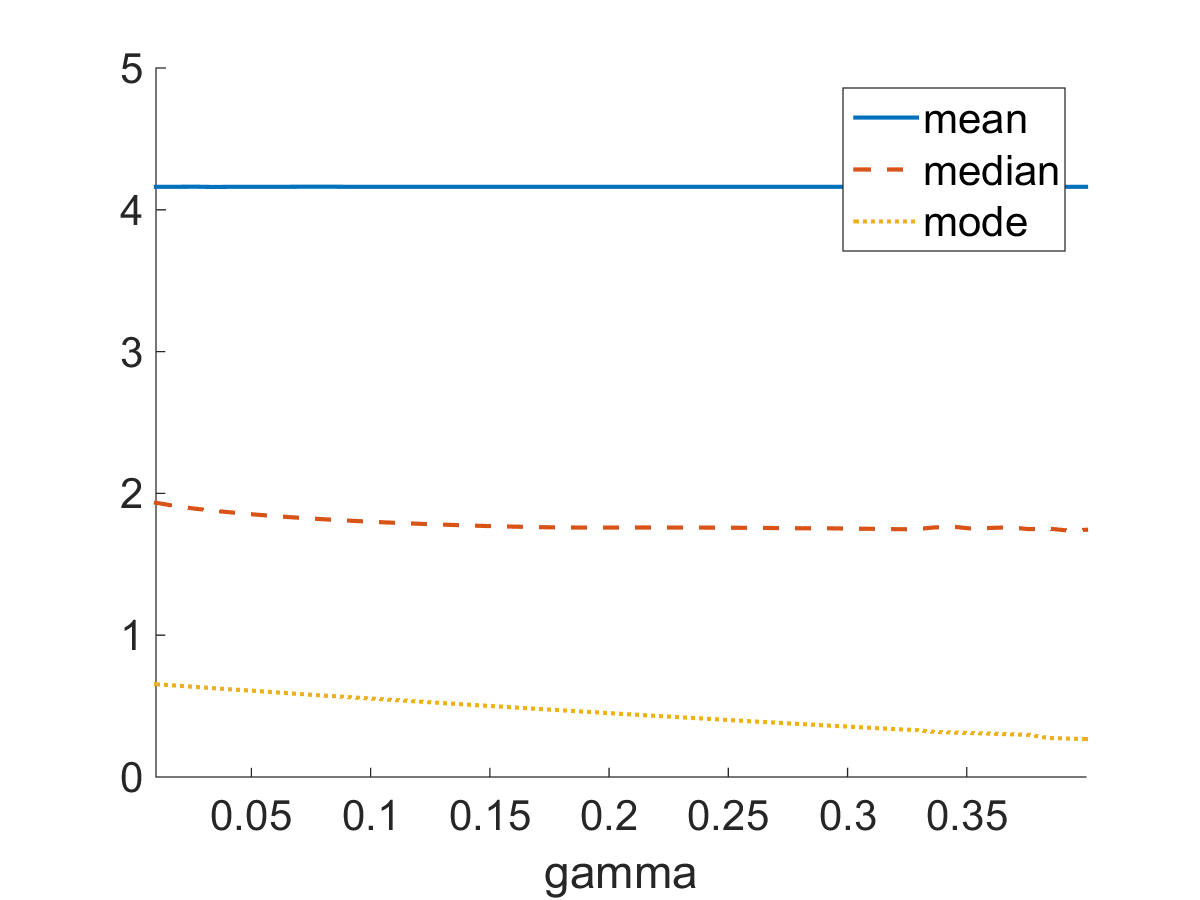}
 \\{\footnotesize (b) Mean, median, and mode of the limit ESD.}
\end{minipage}
\caption{Boxcar + point mass mixture example: \textsc{Spectrode} computes  (a) the density of the limit ESD, normalized to have maximum equal to one for display purposes, and  (b) its mean, median, and mode as a function of $\gamma$.}
\label{two_examples}
\end{figure}

We illustrate \textsc{Spectrode} by computing two spectral quantities of interest. In this example the PSD is an equal mixture of two components: (1) a mixture of ten point masses at $2$,$3,\ldots,11$, with weights forming an arithmetic progression with step $r = 0.005$ as follows: $0.0275, 0.0325,\ldots, 0.0725$; and (2) a uniform distribution - or a `boxcar' - on $[0.5,1.5]$, with mixture weight 1/2. The weights sum to one. We use the aspect ratio $\gamma = 0.01$. 

Figure \ref{two_examples} shows the example computations. In subplot (a), we show the key output of \textsc{Spectrode}, the density of the limit ESD. The computation takes 0.5 seconds on a desktop computer. \emph{A priori} it is not obvious how many disjoint clusters there are in the ESD, or what their shape is. Several insights can be derived from the computation: there are 11 clusters in total, so all population  clusters separate. Each population cluster in the PSD, in this instance, creates a distinct component of the ESD. Further, the two rightmost clusters nearly touch; and the height of the clusters decreases while the width of the point-mass clusters increases. We are not aware of any other method besides \textsc{Spectrode} from which these properties can be derived with comparable speed.

As a second example, in subplot (b) we compute three functionals of the ESD as a function of $\gamma$: the mean, median, and the mode. Such functionals are important in statistical applications: for instance, the median is used for optimal singular value shrinkage in \cite{gavish2014optimal}; see also Section \ref{moments}.  As expected, the mean does not depend on $\gamma$. However, the behavior of the median and the mode is not obvious. Using \textsc{Spectrode}, one can get insight into their behavior: the mode decreases as a function of $\gamma$, and the median is greater than the mode. 

\subsection{Highlights}
To summarize and expand on the above argument, we highlight the following aspects of our method. 
\begin{enumerate}
\item It provides ready access to a wide variety of new examples of limit spectra of covariance matrices. 
There has been, until now, no convenient tool for precisely calculating the ESD for such large collections of examples.

\item \textsc{Spectrode} computes to high precision several important functionals of the limit empirical spectrum, namely: 
\begin{enumerate}
	\item{\bf The edges of the support:} The edges are of substantial interest in the study of phase transitions in spiked covariance models, for instance in \cite{benaych2012singular}, and in designing optimal singular value shrinkers for matrix denoising, for instance in \cite{Nadakuditi2014optshrink}. 

\item {\bf Moments of the ESD:} We show that general moments $\mathbb{E}_F [h(\lambda)]$ can be computed conveniently with \textsc{Spectrode}. The polynomial moments $\mathbb{E}_F [\lambda^k]$ can be computed alternatively via challenging free probability calculations, see \cite{nica2006lectures}. However, this does not hold for more general moments $\mathbb{E}_F [h(\lambda)]$ for arbitrary $h$. Therefore, our method could simplify and extend the applicability of existing techniques by providing a unified way to compute nearly all global spectral moments of interest (Section \ref{moments}).

\item {\bf Contour integrals of the ESD's Stieltjes transform:} Contour integrals of the Stieltjes transform appear crucially in the central limit theorem for linear spectral statistics (LSS) of covariance matrices due to Bai and Silverstein \cite{bai2004clt}.  In applications of this powerful result to multivariate statistics, calculating the contour integral formulas for the mean and the variance is a key step, see e.g. \cite{yao2015large}. These moments are known in closed form only in a few cases.  The current approach is to calculate them using residue theory from complex analysis. This analytic approach can require substantial effort, and is limited to the cases where the ESD is known in closed form. \textsc{Spectrode} enables us to compute such contour integrals numerically instead (Section \ref{contour}). High precision numerical results may suffice in many applications.
\end{enumerate} 

\item \textsc{Spectrode} is directly useful in statistical applications.  We give two key examples where \textsc{Spectrode} could, in our view, significantly improve on current statistical methodology:

\begin{enumerate}
\item{\bf Estimating the covariance matrix ${\bf\Sigma}$:} A problem of considerable interest in statistics is to estimate the unobserved covariance matrix ${\bf \Sigma}$ based on the observed data. When the number of samples $n$ is comparable to the dimension $p$, covariance estimation is a challenging problem. 

A recent series of methods due to Ledoit and Wolf \cite{ledoit2013optimal,LW_spectrum} assumes that one can accurately compute the ESD for any proposal PSD. It repeatedly invokes this ability as the `engine' driving its core iteration. Unfortunately, the whole framework is limited by the accuracy of the ESD computation. \textsc{Spectrode} allows to immediately upgrade this procedure, replacing the existing low-precision ESD computations with high-precision ones.

\item {\bf Hypothesis tests on the covariance matrix:} Testing statistical hypotheses on the covariance matrix can be approached by using the CLT for the linear spectral statistics, see e.g. \cite{bai2009corrections,yao2015large}. As discussed above, we suggest here that the mean and variance in the CLT could be computed numerically (Section \ref{contour}). Our approach, implemented with open source software, might be significantly more convenient than traditional analytic calculations, compare \cite{bai2009corrections,yao2015large}. In addition, it may lead to entirely new test statistics, whose analysis was not possible via pre-existing methodology.
\end{enumerate}

\end{enumerate}

There may be of course many other ways that our efficient computational framework will be useful to the statistics and engineering communities.

\subsection{Properties of \sc{Spectrode}}

\subsubsection{Background}

To state precisely the properties enjoyed by \textsc{Spectrode}, we first set up the formal background. A more thorough presentation will be given in Section \ref{themethod} below.  Consider a sequence of problems indexed by $p$, with deterministic $p \times p$ covariance matrices ${\bf\Sigma}_p$. Let $H_p$ be the distribution of eigenvalues of ${\bf\Sigma}_p$, i.e. $\tau_1,\ldots,\tau_p$ be the eigenvalues of ${\bf\Sigma}_p$, and $H_p$ the discrete distribution with cumulative distribution function $H_p(x) = p^{-1}\sum_i \mathrm{I}(\tau_i \le x)$. For each $p$, draw $n_p$ independent samples ${\bf x}_{ip} $ from a distribution whose covariance matrix is ${\bf\Sigma}_p$. The samples are of the form ${ \bf x}_{ip} = {\bf\Sigma}_p^{1/2}{\bf y}_{ip}$, where ${\bf y}_{ip}$ is a $p$-dimensional random vector with independent and identically distributed, mean zero, variance one entries. 

Arrange the vectors ${ \bf x}_{ip}$ into the rows of the $n \times p$ data matrix $\mathbf{X}_p$. Form the sample covariance matrix $\widehat{{\bf\Sigma}}_p = {n_p}^{-1} \mathbf{X}_p^\top \mathbf{X}_p$. Let $F_p$ be the distribution of the $p$ eigenvalues of $\widehat{{\bf\Sigma}}_p$: thus $\lambda_1,\ldots,\lambda_p$ are the eigenvalues of $\widehat{{\bf\Sigma}}_p$, and $F_p$ is discrete distribution with cumulative distribution function $F_p(x) = p^{-1}\sum_i \mathrm{I}(\lambda_i \le x)$.

Consider the high-dimensional limit where $n,p \to \infty$ such that $p/n_p \to \gamma$. Suppose the eigenvalue distributions $H_p$ converge to a limit population spectral distribution (PSD) $H$, i.e. $H_p \Rightarrow H$ in distribution. Then a cornerstone result in random matrix theory, the Marchenko-Pastur theorem, states that the empirical eigenvalue distributions $F_p$ also converge, almost surely, to a limit empirical spectral distribution (ESD) $F$ \cite{marchenko1967distribution,silverstein1995strong}.

We consider the computation of $F$ from $H$. The method we propose is general and well-defined for all population spectral distributions $H$. Our analysis considers \emph{atomic} PSDs $H$, which are finite mixtures of point masses, but see Section \ref{non_atomic} for the extension to general distributions. Thus we assume $H = \sum_{i=1}^{J} w_i \delta_{t_i}$.

where $\delta_t$ is the point mass at $t$, $w_i > 0$ are the component masses with $\sum_i w_i = 1$, and $t_i > 0$ are the corresponding population eigenvalues. We exclude the case $\gamma = 1$ for technical reasons, specifically the potentially unbounded density of the ESD at $x=0$. 

In pioneering work, Silverstein and Choi \cite{silverstein1995analysis} study the limit ESD corresponding to general $H$ in depth. They show that the limit ESD $F$ has a continuous density $f(x)$ for $x\neq 0$. The density $f(x)$ exists at $x=0$ if $\gamma <1$, but not if $\gamma>1$. Instead $F$ has a point mass of weight $1-\gamma^{-1}$ at $x=0$.  For atomic distributions, it follows from the results in \cite{silverstein1995analysis} that the distribution is supported on a union of $K$ disjoint compact intervals $[l_k,u_k]$, where $l_k$ is the lower endpoint and $u_k$ is the upper endpoint of the $k$-th interval for $1 \le k \le K$. The endpoints are such that $0 \le l_1 < u_1 < \ldots < l_K < u_K$.  The number of sample intervals $K$ is at most the number of population components $J$. If the aspect ratio $\gamma = \lim p/n $ is sufficiently close to 1, then  some population components can ``merge'' in the sample spectrum, and $K <J$ will occur. Finally, it is shown in \cite{silverstein1995analysis} that $f$ is analytic in the neighborhood of all points where the density is positive.

\subsubsection{Input and output of \sc{Spectrode}} Given the aspect ratio $\gamma$, a population spectrum $H$ (for instance an atomic distribution) and a user-specified precision control parameter $\varepsilon >0$, \textsc{Spectrode} produces numerical approximation of $F$ consisting of:
\begin{enumerate}
\item The number of intervals in the support of $F$: $\hat{K} = \hat{K}(\varepsilon)$. 
\item The endpoints of the support intervals $[\hat l_k(\varepsilon),\hat u_k(\varepsilon)]$, for $k = 1, \ldots,  \hat{K}$.
\item The density $\hat{f}(x, \varepsilon)$ for all real $x$.
\end{enumerate}

For the reader's convenience, the input and output of \textsc{Spectrode} is summarized in Table~\ref{black_box}.

\begin{table}[h]
\footnotesize
\centering
\caption{Input and Output of {\sc Spectrode}}\label{black_box}
\begin{tabular}{|l|}
\hline
{\sc Spectrode}: Input and Output                                                                                \\ \hline
{\bf Input: }                                                                                                      \\ \mbox{     }$H \gets$ population spectrum\\ 
\mbox{     }\mbox{     }\mbox{     }\mbox{     }(e.g. atomic measure: eigenvalues $t_1, \ldots, t_J$ and masses $w_1, \ldots, w_J$) \\
\mbox{     }$\textit{$\gamma$} \gets \text{ aspect ratio}$                                                                    \\
\mbox{     }$\varepsilon \gets \text{precision control parameter}$                                                            \\ \hline
{\bf Output:}                                                                                                 \\ 
\mbox{     }$\hat{K}(\varepsilon) \gets $ number of intervals in the support                                                  \\
\mbox{     }$[\hat l_k(\varepsilon),\hat u_k(\varepsilon)] \gets $ endpoints of intervals in the support                      \\
\mbox{     }$\hat{f}(x, \varepsilon)\gets $ density of the spectrum, for any $x$                                              \\ \hline
\end{tabular}
\end{table}

\subsubsection{Correctness of \sc{Spectrode}} 

Our main theoretical results, given in Section \ref{themethod} below, demonstrate the correctness of our proposed method. As the user-specified precision control parameter $\varepsilon \to 0$, \textsc{Spectrode} has the following performance characteristics:

\begin{enumerate}
\item Correctness of the \textit{number of disjoint intervals} of the support:
\begin{equation}
\lim _{\varepsilon \to 0} \hat{K}(\varepsilon) = K. 
\label{recover_num_disjoint}
\end{equation}
\item Accuracy of the \textit{endpoints} of the support:
\begin{equation}
\lim _{\varepsilon \to 0} \hat l_k(\varepsilon) = l_k, \mbox{ and }  \lim _{\varepsilon \to 0} \hat u_k(\varepsilon) = u_k. 
\label{compute_endpoints}
\end{equation}
\item Accuracy of the \textit{density}\footnote{A more precise statement is: For $\gamma<1$, the convergence is uniform over all $x\in \mathbb{R}$. For $\gamma>1$, the density does not exist at $x=0$, but is equal to $f(x)=0$ on some intervals $\mathcal{I} = (-\delta,0) \cup (0,\delta)$, with $\delta>0$. Then, the convergence is uniform over the closed set $ \mathbb{R}\setminus\mathcal{I}$.}:
\begin{equation}
\lim _{\varepsilon \to 0} \sup_{x \in \mathbb{R}\setminus\{0\}}|\hat f(x,\varepsilon)- f(x)| = 0. 
\label{compute_density}
\end{equation}
\end{enumerate}

Claims \eqref{recover_num_disjoint}-\eqref{compute_endpoints} are proved in Theorem \ref{endpoints_accurate}, while claim \eqref{compute_density} is proved in Theorem \ref{density_approx}. The claims are verified in reproducible computational experiments in the next section (see also Section \ref{software}). 

As a consequence of these results, we show in Corollary \ref{moments_approx} that the moments of the limit ESD can be accurately computed by integrals against the approximated density. Finally, we adapt \textsc{Spectrode} to compute contour integrals involving the Stieltjes transform of the limit ESD in Section \ref{contour}. 

The computational framework used by \textsc{Spectrode} is applicable to general population distributions $H$, not just atomic distributions. Indeed, we already showed an example involving a uniform distribution in Figure \ref{two_examples}. However, our current software implementation of \textsc{Spectrode} assumes that $H$ is a finite mixture of uniform distributions and point masses. Moreover, the proof of convergence that we supply in this paper only holds for atomic distributions. Therefore, we will work with atomic distributions through most of this paper. This issue is further discussed in Section \ref{non_atomic}.

In the rest of the paper, we validate our claims with computational experiments in Section \ref{numerical_results}. \textsc{Spectrode} and its convergence in presented in Section \ref{themethod}. After giving some applications and extensions in Section \ref{Consequences and Extensions}, we describe related literature in Section \ref{relatedwork}. The available software and the tools to reproduce our computational results are described in Section \ref{software}.

\section{Computational Results}
\label{numerical_results}

In addition to theoretical correctness results, we validate our performance claims from the introduction by computational experiments. We present supporting evidence  for claims \eqref{recover_num_disjoint} and \eqref{compute_endpoints} on the correctness of the support in Section \ref{exp:support}; for claim \eqref{compute_density} on the correctness of the density in Section \ref{sec:test_correctness}; and for the computational efficiency of \textsc{Spectrode} in Section \ref{timing}. The experiments are reproducible (see Section \ref{software}).

\subsection{Correctness of the support}
\label{exp:support}

\subsubsection{The comb model}
To show that our algorithm identifies the support (Claims \eqref{recover_num_disjoint} - \eqref{compute_endpoints}), we consider the following \emph{comb model} for eigenvalues. Here the eigenvalues and the weights are each defined in terms of arithmetic progressions
$ H = \sum_{j=0}^{J-1} (a + jb) \delta_{c+jd}$.

The eigenvalues are placed at $c+jd$, for some $c>0$ and $d \in \mathbb{R}$ such that $c+jd>0$ for all $j$. They have weights $a + jb$ for some $a,b>0$. The constants $a$, $b$ are constrained so that the sum of the weights is one, thus only one of them, say $b$, is a free parameter. This is a flexible model governed by only three parameters. 

The comb model is useful for gaining insight into the support identification problem. 
Interesting behavior occurs as a function of the problem parameters $J, b,c,d$ and $\gamma$. For instance, let us change $\gamma$ while fixing all other variables. If $\gamma \to 0$, then $F_{\gamma} \to H$ \cite{silverstein1995analysis}; intuitively the number of samples is much larger than the dimension, $n \gg p$, so the ESD converges to the atomic population SD. Now as $\gamma$ increases, the sharp atoms spread out into density bumps. 

If the original atoms are sufficiently close to each other, then at some point bumps will start merging. The precise moment when this happens is in general a complicated function of $J, b,c,d$ and $\gamma$, but can be determined precisely with \textsc{Spectrode}. Hence we provide a useful tool for understanding and exploring support identification. 

\subsubsection{Testing our method}  

Since in most cases there is no closed form for the density, we compare our algorithm against the Fixed-Point Algorithm (\textsc{FPA}); see its description preceding Lemma \ref{ode_unique}. \textsc{FPA} is empirically slow for dense grid evaluation (Section \ref{timing}), but converges, as shown in a more general setting in \cite{couillet2011deterministic}. 

Since the convergence rate is not known, one cannot guarantee the exact accuracy of \textsc{FPA}. We have validated \textsc{FPA} separately on simpler test cases where the closed form expression was known (data not shown).

More specifically, our numerical test has the following framework: For given problem parameters, and an accuracy control parameter $\varepsilon$, we run \textsc{Spectrode} to produce numerical approximations $\hat K(\varepsilon)$, $\hat l_k(\varepsilon)$,  $\hat r_k(\varepsilon)$. The method also returns a dense grid of $x_i$. On this grid we compute the density approximations $\hat{f}_{\mathrm{fp}}(x_{i}, \varepsilon_0)$ of FPA, with an accuracy control parameter $\varepsilon_0$. Here the parameter $\varepsilon_0$ is smaller than $\varepsilon$, so that the fixed-point algorithm's solution can be reliably used as a basis of comparison for $\varepsilon$-accurate computations. 

We then define the gold standard approximation to the support as the connected components of the grid $x_i$ where the density $\hat{f}_{\mathrm{fp}}(x_{i}, \varepsilon_0) > \varepsilon_0$. This step thresholds the density at level $\varepsilon_0$, because FPA was tuned to have accuracy of the order $\varepsilon_0$, its control parameter. This prescription produces approximations $\hat K_{\mathrm{fp}}(\varepsilon_0)$, $\hat l_{\mathrm{fp},k}(\varepsilon_0)$,  $\hat r_{\mathrm{fp},k}(\varepsilon_0)$, which we use to evaluate \textsc{Spectrode}.

We evaluate \textsc{Spectrode} by calculating the error in the number of clusters:  $\Delta_K(\varepsilon) = |\hat K(\varepsilon) - \hat K_{\mathrm{fp}}(\varepsilon_0)|$, where $\varepsilon_0$ is suppressed for brevity. For the support endpoints we proceed similarly. If the number of intervals is not computed correctly, then we set this error to $+\infty$. Even if $\hat K=K$, we have to take into account the finite precision of the grid $x_i$. 

Consider a lower endpoint for one of the clusters. Suppose the two methods return the grid elements $x_i$ and $x_j$, with $i\le j$, as numerical approximations for the lower endpoint. Due to finite grid precision, $|x_i-x_j|$ can be an understimate of the actual error. For instance if $x_i=x_j$, it's clear that our accuracy bound cannot, in general, be better than the size of grid spacings $|x_i-x_{i-1}|$, $|x_j-x_{j+1}|$. Generally, a conservative estimate of the accuracy can be obtained by adding these neighboring grid spacings to $|x_i-x_j|$ to get (recall $x_{i-1} < x_{i}\le x_{j} <x_{j+1}$): $\Delta^l_k(\varepsilon) = |x_{i-1}-x_{j+1}|$.

Finally the approximation error for lower endpoints is defined as the average of all errors for lower endpoints:
$\Delta_l(\varepsilon) = \sum_{k=1}^{\hat K}\Delta^l_k(\varepsilon)/\hat{K}$.
The approximation error for upper endpoints $\Delta_u(\varepsilon)$ is defined analogously. Note again: if $\hat K \neq K$, we set the error to be $\infty$.

\begin{figure}
\centering
\begin{minipage}{.5\textwidth}
  \centering
  \includegraphics[scale=0.33]
  {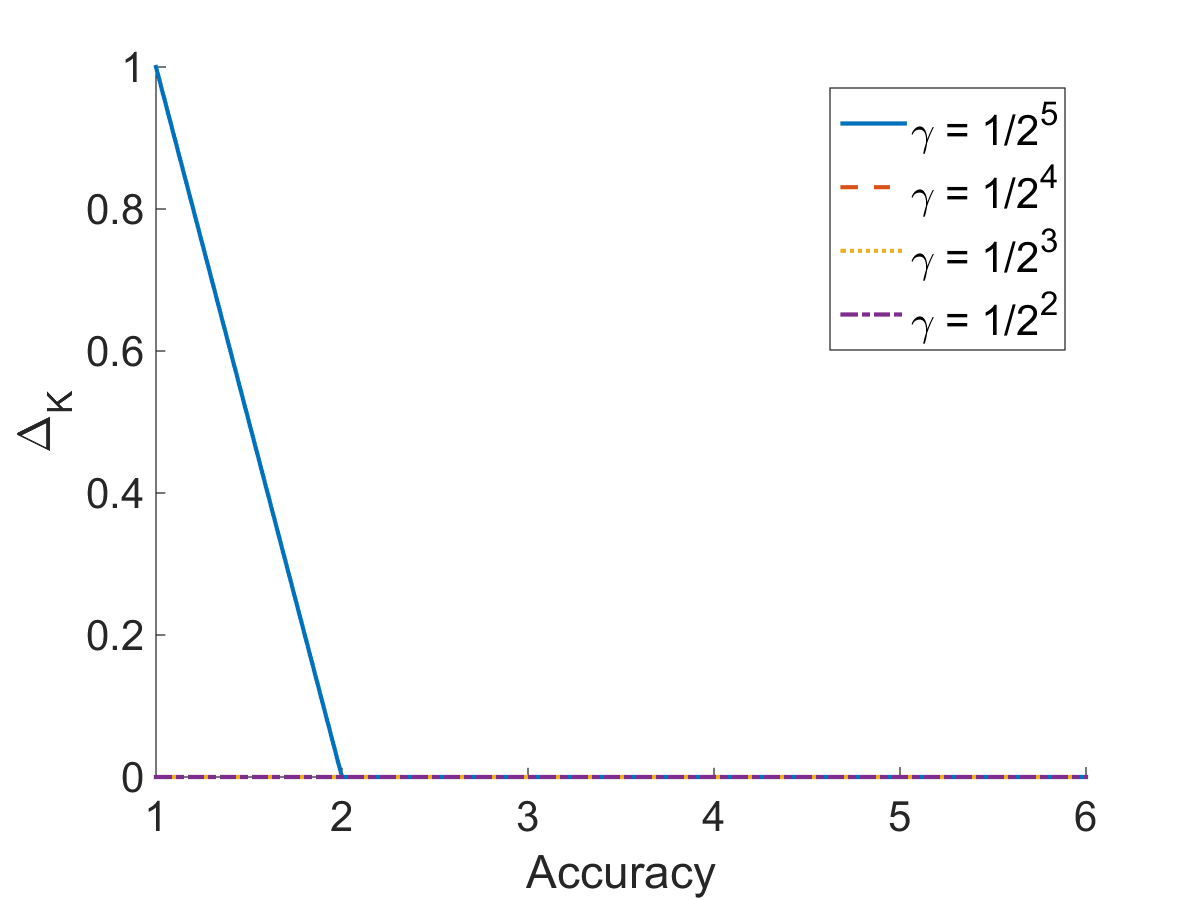}\\{\footnotesize $\Delta_K(\varepsilon)$: error in the number of clusters.}
\end{minipage}%
\begin{minipage}{.5\textwidth}
  \centering
  \includegraphics[scale=0.33]{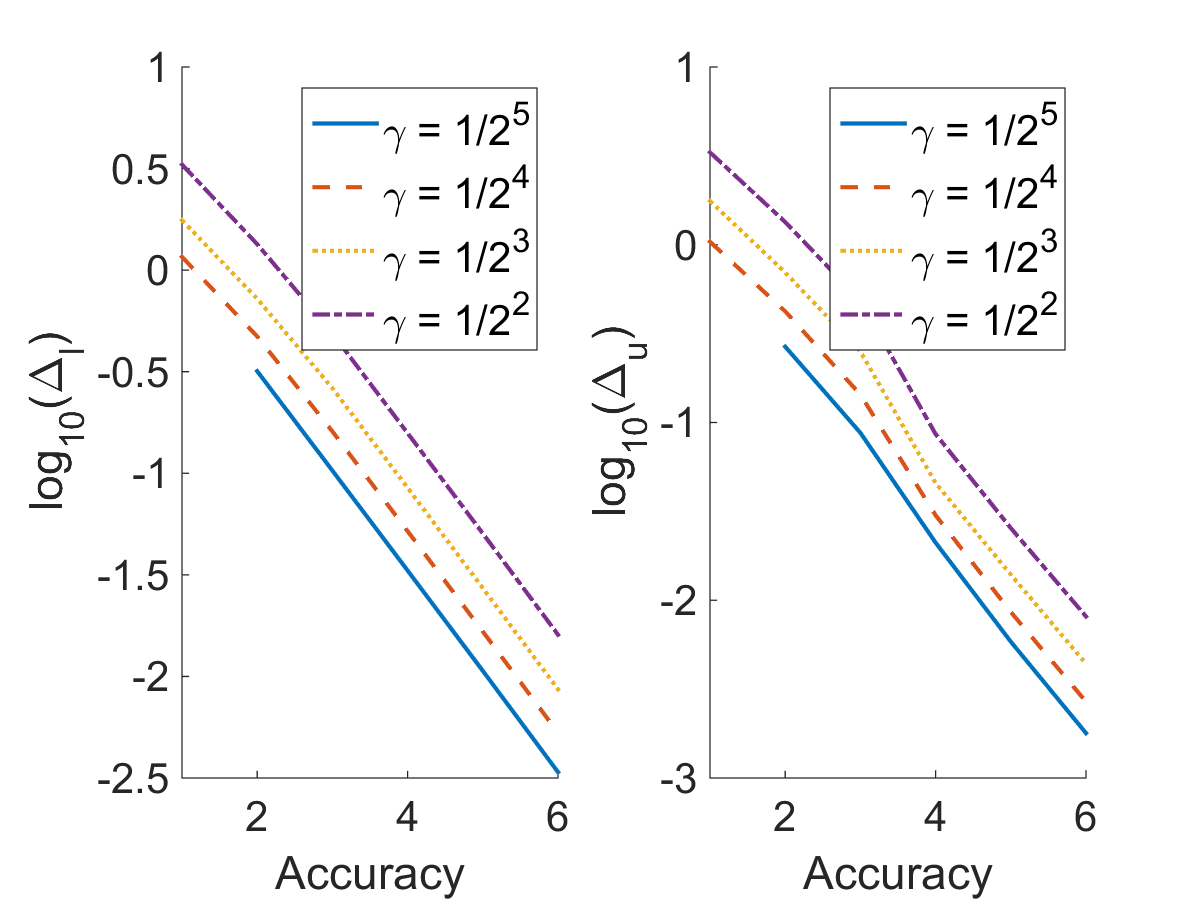}
  \\{\footnotesize $\Delta_l(\varepsilon),\Delta_u(\varepsilon)$: error in the cluster endpoints.}
\end{minipage}
\caption{\textsc{Spectrode} (a) correctly identifies the number of disjoint intervals in a comb model; and (b) accurately computes the lower and upper endpoints.}
\label{support_identification}
\end{figure}

The comb model in this test has $J=6$  clusters spaced evenly between $1/2$ and $10$, and a gap in the sequence of weights $b=0.01$, leading to nearly equal weights. The aspect ratio $\gamma$ takes fourf values between $1/2^5$ and $1/2^2$. We fix $\varepsilon_0 = 10^{-7}$ and vary the accuracy $\varepsilon = 10^{-m}$, $m=1, \ldots ,6$.

\subsubsection{Results}
We show the results of the experiment on Figure \ref{support_identification}. In panel (a), we show the error in the number of clusters $\Delta_K(\varepsilon)$ for the four different aspect ratios $\gamma$, as a function of the accuracy. \textsc{Spectrode} makes at most one error in the number of clusters. For sufficiently high accuracy the number of clusters is correct.

In panel (b), we show the approximation error for the endpoints $\Delta_l(\varepsilon)$ (left), and $\Delta_u(\varepsilon)$ (right), on a logarithmic scale. For the experiments where $\Delta_K(\varepsilon)>0$, we leave blanks. We observe that for all values of $\gamma$ the approximation gets better with higher accuracy. This convergence appears nearly linear in $\varepsilon$: number of correct digits is approximately linearly related to $-\log_{10}(\varepsilon)$, with a slope of approximately 1/2.  These experiments provide evidence for our claims \eqref{recover_num_disjoint} - \eqref{compute_endpoints}: \textsc{Spectrode} correctly identifies the support, as the precision parameter $\varepsilon \to 0$. 

\subsection{Accurate computation of the density}
\label{sec:test_correctness}

\begin{figure}
\centering
\begin{minipage}{.5\textwidth}
  \centering
  \includegraphics[scale=0.33]
  {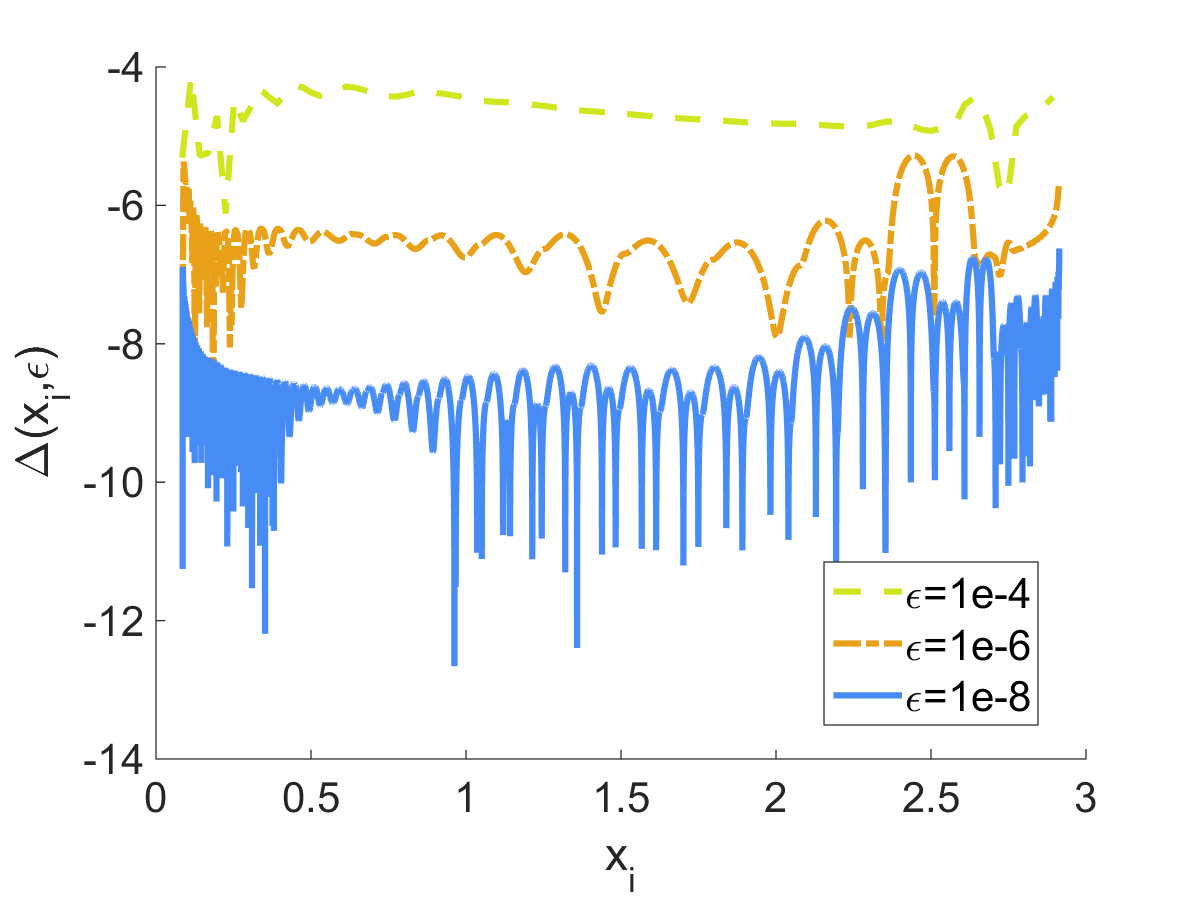}
  \\{\footnotesize (a)  {\tt MP}}
\end{minipage}%
\begin{minipage}{.5\textwidth}
  \centering
  \includegraphics[scale=0.33]{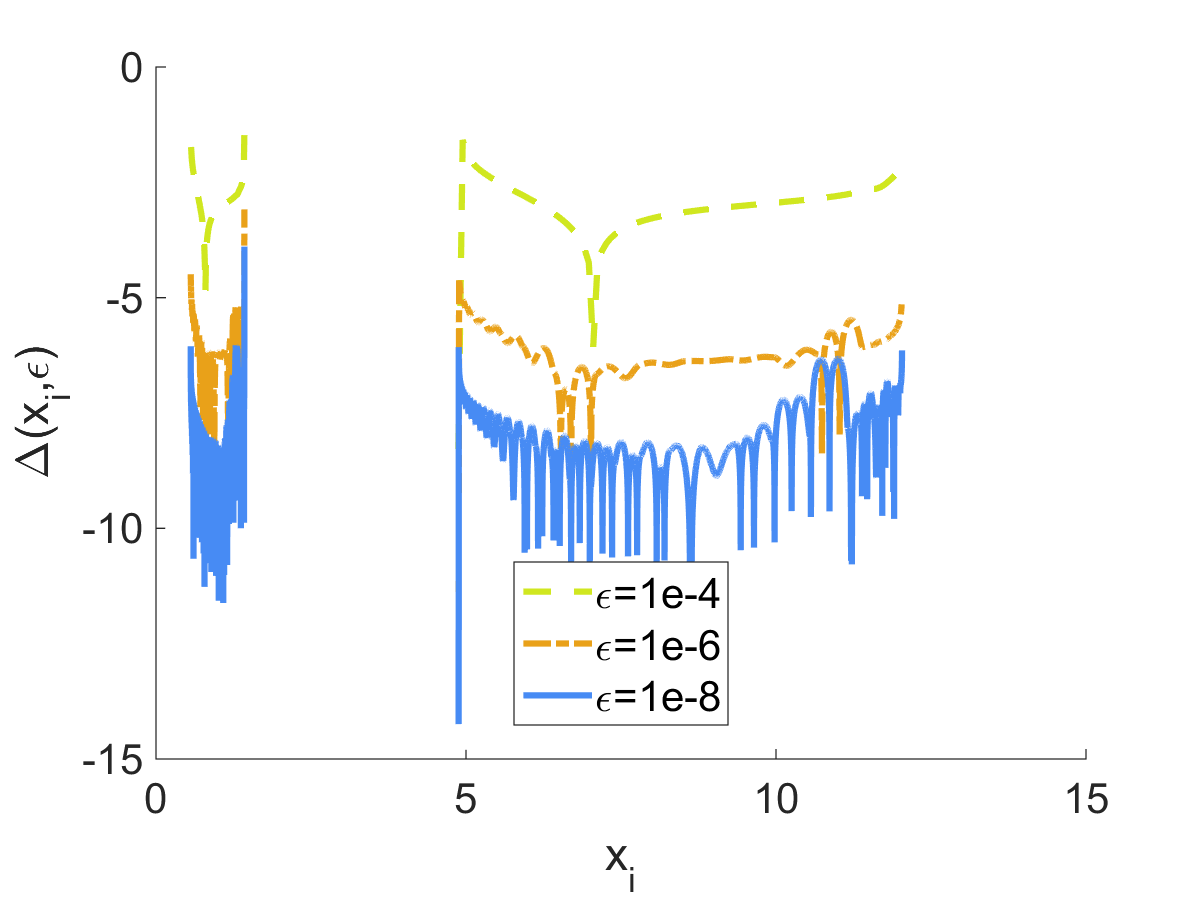}
  \\{\footnotesize (b)  {\tt TwoPoint}}
\end{minipage}
\caption{Accurate computation of the density (Section \ref{sec:test_correctness}) in two test problems. Left panel: {\tt MP}. Right panel: {\tt TwoPoint}. We display the the error in the density $\Delta(x_i,\varepsilon)$ for three different values of $\varepsilon$, $10^{-4},10^{-6},10^{-8}$.}
\label{test_correctness}
\end{figure}

We validate our claim \eqref{compute_density} that \textsc{Spectrode} accurately computes the limit density. To test the accuracy up to several digits, we rely on examples where the density $f$ can be found exactly in an alternative way. 

\subsubsection{Test problems}

For our first test, called {\tt MP}, the population spectrum $H$ is a point mass at 1. The ESD has the well-known density:

\begin{equation}
f(x; \gamma) = \frac{\sqrt{(\gamma_+- x)(x-\gamma_-)}}{2\gamma x}I(x\in[\gamma_-,\gamma_+]),
\label{MP_density}
\end{equation}

where $\gamma_\pm = (1 \pm \sqrt{\gamma})^2$. The distribution of eigenvalues has a point mass at $x=0$ if $\gamma >1$. 

For the second test, called {\tt TwoPoint}, $H$ is a mixture of two point masses at $x=1$ and $t$, with weights $q$ and $1-q$, then the Silverstein equation \eqref{silv.eq} for $v(z)$ becomes
$$  
-\frac{1}{v(z)} = z - \gamma\left(\frac{q}{1 + v(z)} + \frac{(1-q)t}{1 + tv(z)}\right).
$$
This is equivalent to a polynomial equation in $v$ of degree at most three:
\begin{equation}
zt v^3 + (zt + z + t - t\gamma)v^2 + \left[z + t + 1 - \gamma\left(q + (1 - q) t\right)\right)] v + 1 = 0. 
\label{cubic}
\end{equation}

When $z, t \neq 0$, as is always the case for us, this is a cubic equation in $v$, which can be solved exactly. The theory of Silverstein and Choi \cite{silverstein1995analysis} guarantees that for real $x$ within the spectrum support of the spectrum, Eq. \eqref{cubic} has exactly one root with positive imaginary part. This is guaranteed to lead to the correct density.  For real $x$ outside  the spectrum, Eq. \eqref{cubic} has three real roots. This distinguishes the inside from the outside.

For each grid point $x_i$ and accuracy $\varepsilon$, \textsc{Spectrode} produces a numerical approximation $\hat f(x_{i},\varepsilon)$ to the true density $f(x_{i})$. To test \textsc{Spectrode}, we compute the error in the density:

\begin{equation}
\label{error_in_density}
\Delta(x_i,\varepsilon) = \log_{10}|\hat f(x_{i},\varepsilon) - f(x_{i})|. 
\end{equation}

We set $\gamma=1/2$, and vary the global accuracy parameter $\varepsilon$ in powers of ten as $10^{-4},10^{-6},10^{-8}$. In addition, for the two-point mixture model we set a fraction $q = 1/2$ of the eigenvalues to $t = 8$.

\subsubsection{Results} 
The results are shown in Figure \ref{test_correctness}. For both test problems, the error in the density decreases uniformly as the tuning parameter $\varepsilon \to 0$. Furthermore, \textsc{Spectrode} produces approximately the required accuracy: for instance the average precision for $\varepsilon = 10^{-8}$ is approximately eight digits. These experiments provide empirical evidence for claim  \eqref{compute_density}: \textsc{Spectrode} computes the density of the limit ESD with uniform accuracy over all $x$.

\subsection{Computational efficiency}
\label{timing}

\begin{figure}
\centering
\begin{minipage}{.5\textwidth}
  \centering
  \includegraphics[scale=0.33]
  {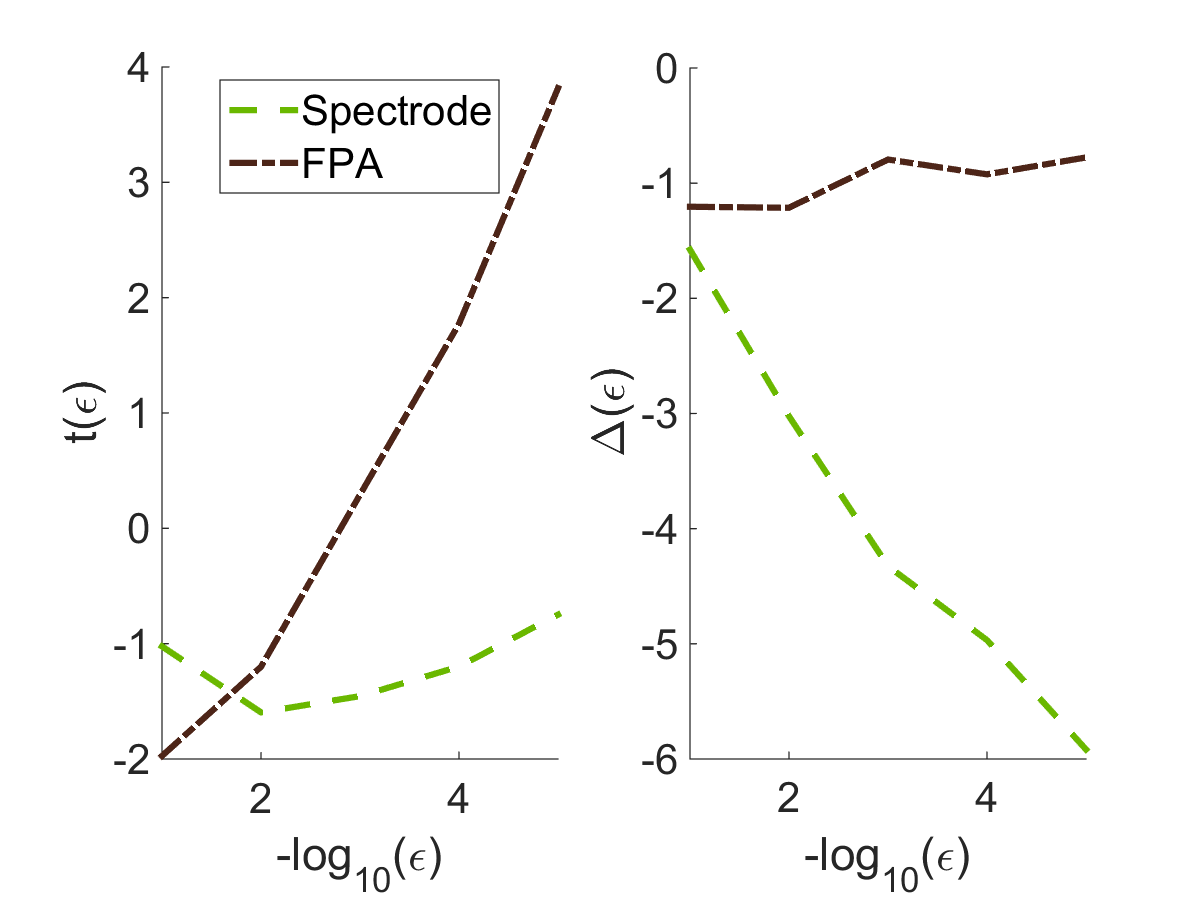}\\{\footnotesize (a) {\tt MP}}
\end{minipage}%
\begin{minipage}{.5\textwidth}
  \centering
  \includegraphics[scale=0.33]{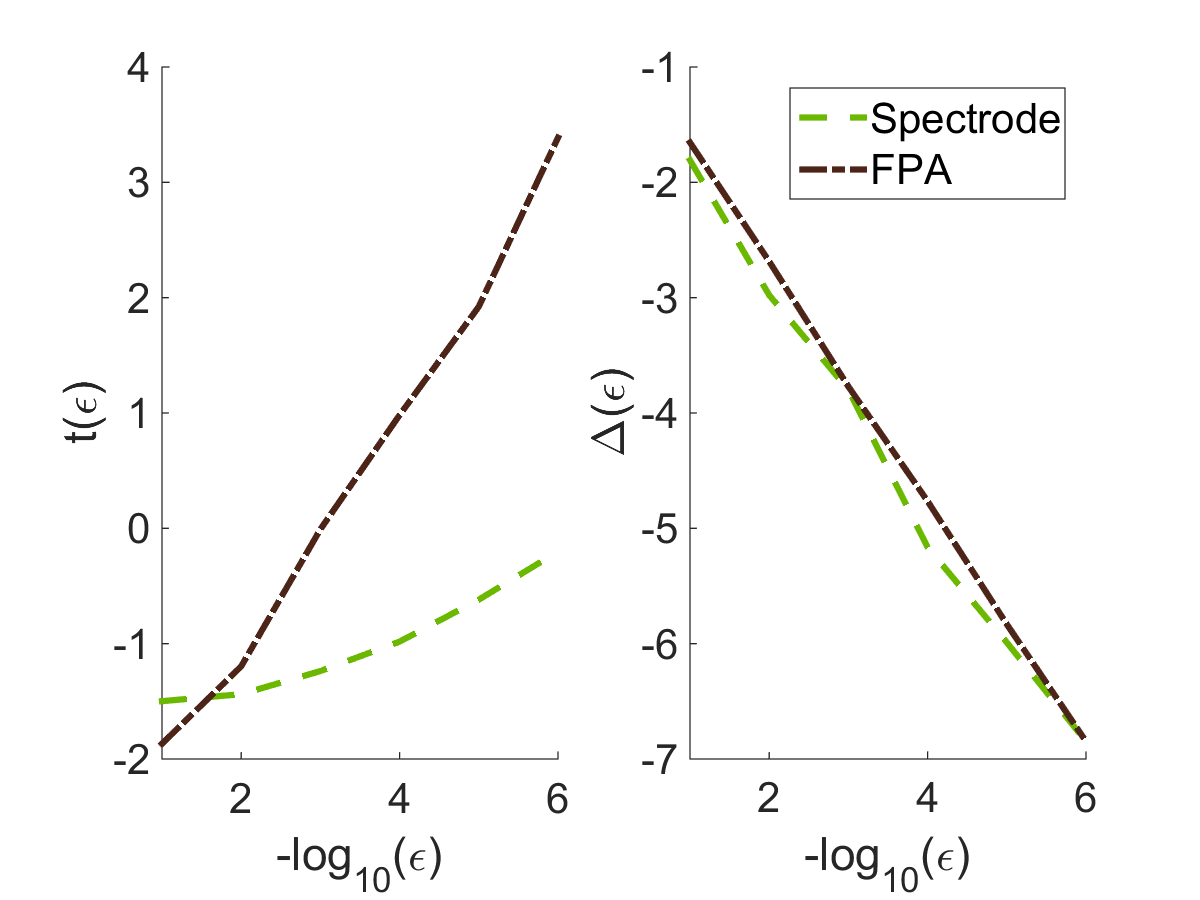}
  \\{\footnotesize (b) {\tt TwoPoint}}
\end{minipage}
\caption{Running time (base ten logarithm)  vs accuracy on two test problems (Section \ref{timing}). We show the log-running time ($t(\varepsilon,H,\gamma)$, left subplot in (a) and (b) ) and average accuracy ($\bar\Delta(\varepsilon)$, right subplot in (a) and (b)) of the methods as a function of the number of correct significant digits $k$ in the precision parameter $\varepsilon = 10^{-k}$. Methods: \textsc{Spectrode}  - dashed, fixed-point (FP), - dash - dotted. }
\label{fig:timing}
\end{figure}

We now establish that \textsc{Spectrode} is computationally efficient. We compare running times with \textsc{FPA} and find that for high precision problems on dense grids, \textsc{Spectrode} significantly outperforms \textsc{FPA}. 

\subsubsection{Test problems and parameters} We use the same test problems, {\tt MP} and {\tt TwoPoint}, and the same parameters ($J,\gamma,q$), as in the previous section. For a specified set of inputs $H$, $\gamma$, and accuracy $\varepsilon$, \textsc{Spectrode} produces density estimates $\hat f(x_{i},\varepsilon)$ on a grid $x_i$ $i=1, \ldots, I$. We record the running time $t(\varepsilon,H,\gamma)$ of the algorithm, defined as the base ten logarithm of seconds to completion. Times were measured on an Intel i7 2.4 GHz PC. The relative running times are relevant more generally for other systems. We also record the average accuracy in the density, defined as:
$ \bar\Delta(\varepsilon) = -\log_{10}\left(\sum_{i=1}^{I}|\hat f(x_{i},\varepsilon) - f(x_{i})|/I\right)$.
Here $f(x_i)$ is the true density which is available in both cases.  

We repeat this experiment for \textsc{FPA}, which is described later in Algorithm \ref{fp.alg} from Section \ref{accurate_density}. We record the running time $t_{\mathrm{fp}}(\varepsilon,H,\gamma)$ and accuracy $\bar\Delta_{\mathrm{fp}}(\varepsilon)$. To ensure comparability, we use the same grid $x_i$ that was produced by \textsc{Spectrode}. We set the accuracy parameter $\eta$ to $\eta = \varepsilon$. We emphasize that $\eta$ limits the precision due to the smoothing property of Stieltjes transforms (see Lemma \ref{stieltjes_bound}). Therefore, it should be of the same order as $\varepsilon$ to get the desired precision; this motivates our choice $\eta = \varepsilon$. Further, we apply an early stopping rule to the fixed-point algorithm, due to its long running time. For each grid point, we stop after $1/\varepsilon$ iterations. For this reason the fixed-point algorithm does not always achieve the required accuracy.  

\subsubsection{Results}
Figure \ref{fig:timing} shows the results, for {\tt MP} in the left panel and for {\tt TwoPoint} in the right panel. The running time $t(\varepsilon,H,\gamma)$ and the accuracy $\bar\Delta(\varepsilon)$ of the two methods are displayed as a function of the number of significant digits requested $-\log_{10}(\varepsilon)$.

For the test problem {\tt MP} in subplot (a), the number of significant digits requested varies from one to five, i.e. $\varepsilon = 10^{-1}, \ldots, 10^{-5}$.  The running time of \textsc{Spectrode} is below $0.5$ seconds, regardless of the accuracy requested, and produces the required average accuracy. The running time of \textsc{FPA} increases approximately linearly in $1/\varepsilon$, reaching $\sim 5000$ seconds for $\varepsilon =  10^{-5}$. At the same time, the average accuracy is always about one digit. In this example \textsc{Spectrode} is faster and more accurate at the same time. Had we stopped later, \textsc{FPA} would have taken even longer to converge.

This result is worth emphasizing: \textsc{Spectrode} is 1000 times faster and 1000 times more accurate than the fixed-point algorithm, at least for the highest precision $\varepsilon =  10^{-5}$.

For the test problem {\tt TwoPoint} in subplot (b), the number of significant digits requested now varies from one to six.  The running time of \textsc{Spectrode} is below one second, and it produces the required accuracy. The running time of \textsc{FPA} increases as $\varepsilon \downarrow 0$, reaching about $\sim$ 2000 seconds for the largest accuracy. The fixed-point algorithm also gives approximately the required accuracy. In this case, \textsc{Spectrode} is faster than \textsc{FPA} (by three orders of magnitude for $\varepsilon = 10^{-6}$) while producing the same accuracy.  These two examples show that \textsc{Spectrode} is fast and accurate, and compares favorably to the fixed-point algorithm.

\section{Theoretical Results}
\label{themethod}

\subsection{Background}

In this section we explain our method, and prove its convergence. We start with some background about limiting spectral distributions of large covariance matrices. Chapter 7 of Couillet and Debbah's monograph \cite{couillet2011random} provides a good summary of the material presented here. Recall the model presented in the introduction: $\mathbf{X}$ is $n \times p$, of the form $\mathbf{X} = \mathbf{Y}{\bf\Sigma}_p^{1/2}$, where the entries of $\mathbf{Y}$ are iid with mean zero and variance one. We take a sequence of such problems with $p,n$ growing to infinity such that $p/n \to \gamma>0$. The population SD of the deterministic ${\bf\Sigma}_p$ converges to the limit PSD $H$. 

The Marchenko-Pastur theorem (see \cite{marchenko1967distribution, silverstein1995strong}) states that the empirical SD of the sample covariance matrix $\widehat{{\bf\Sigma}} = {n}^{-1} \mathbf{X}^\top \mathbf{X}$ converges almost surely to a distribution $F$. Denote the imaginary part of $z \in \mathbb{C}$ by $\mathrm{Imag}(z)$  and the upper half of the complex plane by $\mathbb{C}^+ = \{z \in \mathbb{C}: \mathrm{Imag}(z)>0 \}$. If $m(z)$ denotes the Stieltjes transform of $F$, defined for $z \in \mathbb{C}^+$ as:
\begin{equation}
m(z) = \int \frac{\, dF(x)}{x - z}, 
\label{ST}
\end{equation}

 and $v(z)$ is the companion Stieltjes transform defined on $\mathbb{C}^+$ by the equation
 
\begin{equation}
\label{dual.ST}
\gamma\left(m(z)+1/z\right) = v(z)+1/z,
\end{equation}
 
   then it is shown in \cite{silverstein1995analysis} that $v(z)$ is the unique solution with positive imaginary part of the Silverstein equation :
\begin{equation}
\label{silv.eq}
-\frac{1}{v(z)} = z - \gamma \int \frac{t \, dH(t)}{1 + tv(z)},  \, z \in \mathbb{C}^+.
\end{equation}

This equation links the limit PSD $H$ to the limit ESD $F$. The function $v$ is analytic in the upper half $\mathbb{C}^+$ of the complex plane. Marchenko and Pastur  \cite{marchenko1967distribution} obtained a more general, but more complicated, form of this equation. The present form is due to J.W. Silverstein and appeared in  \cite{silverstein1992signal}. 

Our problem is to compute $F$ from $H$. For this it is enough to find $v(z)$, and thus $m(z)$,  for $z$ on a grid close to the real axis. Then, since $F$ has a density $f$, as shown in \cite{silverstein1995analysis}, by the inversion formula for Stieltjes transforms, we have the limit

\begin{equation}
f(x) = \frac{1}{\pi} \lim_{\varepsilon \to 0}\mathrm{Imag}\{ { m (x + i \varepsilon)}\}.
\label{stieltjes_to_density}
\end{equation}

This limit is valid at all points $x$ where the density $f(x)$ exists\footnote{The result of Silverstein and Choi \cite{silverstein1995analysis} is stronger. It also states that the density $f(x)$ is the imaginary part of $m(x)$, defined as the limit of the Stieltjes transform as $z \to x$; and the associated $v(x)$ is in fact the solution of the Silverstein equation \eqref{silv.eq} with $z=x$. While this is in fact an exact equation for the density of the ESD, we will not use it in the current paper. We will instead rely on the Silverstein equation on a grid close to the real axis. The reason is that we use FPA as a starting point of our method, and FPA is only known to converge for $z \in \mathbb{C}^+$, in the interior of the upper halfplane.}. For $\gamma<1$, the density exists for all $x$, while for $\gamma>1$ it exists for all $x$ except for $x=0$. In the latter case $F$ has an easily computable point mass at $x=0$. Numerically it is natural to use the approximation $\hat f = \mathrm{Imag}\{  m (x + i \varepsilon)\}/\pi$ for some small $\varepsilon>0$. There may be more efficient methods for interpolating $m (x + i \varepsilon)$, but those are beyond our scope.

In general there are many solutions to \eqref{silv.eq} with non-positive imaginary part. Indeed, for $H$ a finite mixture of point masses, $H = \sum_{i=1}^{J} w_i \delta_{t_i}$, the Silverstein equation becomes

\begin{equation}
\label{MP.eq.disc}
-\frac{1}{v(z)} = z - \gamma\sum_{i=1}^{p} \frac{w_it_i}{1 + t_iv(z)},  \, z \in \mathbb{C}^+.
\end{equation}

This is generally equivalent to a polynomial equation of degree $p+1$, and hence it has $p+1$ complex roots, compare \cite{rao2008polynomial}. The desired solution will ``track'' one of the roots as a function of $z$. However, finding the right solution by root tracking is not feasible in general for large $p$. There does not appear to be a way to efficiently compute the coefficents of the polynomial. Indeed, those coefficients involve all symmetric sums of the eigenvalues $t_i$, and computing these terms seems prohibitively expensive. We will take a different approach. 

\subsection{Our approach}

We differentiate the fixed-point equation  \eqref{MP.eq.disc} in $z$, and solve for $v'$. These steps yield the following ordinary differential equation for $v$:

\begin{equation}
\frac{dv}{dz} = \mathcal{F}(v) := \frac{1}{\frac{1}{v^2} -\gamma\sum_{i=1}^{J} \frac{w_it_i^2}{(1 + t_iv)^2}}, \mbox{ } \hat v(z_0) = \hat v_0.
\label{ode}
\end{equation}

A high-accuracy starting point for the ODE can be found by running the fixed-point algorithm once, at a point $z_0=x_0 + i\varepsilon$ near the real axis. Then, the ESD can be computed at other real values $x$ by solving the ODE on the line $x + i\varepsilon$, for the fixed $\varepsilon$ and varying $x \in \mathbb{R}$. Solving the ODE turns out to be much more convenient than solving the original equation repeatedly for each new point $x + i\varepsilon$.  The reason is that the limit spectral density is smooth, and the Stieltjes transform provides further smoothing. Our ODE uses this smoothness for efficient computation. This is in contrast to \textsc{FPA}, which re-runs the entire fixed-point iteration at each nearby point and does not exploit the smoothness. Using the smoothness via the ODE is the key inspiration behind our approach\footnote{This ``spectral ODE'' was also the source of the name \textsc{Spectrode}.}. 


Since the ODE was obtained by differentiating \eqref{MP.eq.disc}, it has at least one solution. We will show in our proof that there is only one solution in the range of interest. Then, once we obtain a numerical solution $\hat v(x)$ to the ODE, we could define $\hat f$ directly based on the explicit formulas \eqref{dual.ST} and \eqref{stieltjes_to_density}. This direct method already leads to a relatively good solution which provably converges to the right answer as $\varepsilon \to 0$. 

However, for small $\varepsilon$ this direct method has numerical problems caused by irregularities in the density at the edges of the support. Indeed, as shown in Figure \ref{two_examples}, the density exhibits a square root behavior at the boundary of the support. If implemented naively, these square root irregularities can cause difficulties to the ODE solver.  We will avoid these difficulties by a more sophisticated method, which first finds the edges of the spectrum by exploiting the theory of Silverstein and Choi \cite{silverstein1995analysis}, and later solves the ODE only within the support of $F$.

In brief, Silverstein and Choifind the support of the spectrum in the following way. They consider the Silverstein equation \eqref{silv.eq}, which defines the companion Stieltjes transform $v$ implicitly as a function of $z$. They observe that the same equation defines a function $z(v)$:
\begin{equation}
\label{ST_inverse}
z(v) = -\frac{1}{v}  + \gamma\sum_{i=1}^{J} \frac{w_it_i}{1 + t_iv}.
\end{equation}

 They prove that the support can be obtained by analyzing the monotonicity of $z$. Specifically, they show that the real intervals $v \in (a,b)$ where $z(v)$ is increasing, i.e. $z'(v)>0$, are precisely those, whose image under $z(v)$ (i.e. $(z(a),z(b))$) is the complement of the support of the distribution $\underline{F}$. Here $\underline{F}$ is the limit ESD of $n^{-1} \mathbf{X} \mathbf{X}^\top$. Since $\underline{F}$ is directly related to $F$, see \eqref{companion}, in theory this is enough to find the support. We will exhibit a computationally accurate method to approximate the values of the increasing intervals $(a,b)$, and prove its correctness. 

Algorithm \ref{ode_alg} below contains pseudocode for \textsc{Spectrode}. The full details are provided in the next sections.

\begin{algorithm}
\caption{\textsc{Spectrode}: computation of the limit ESD }\label{ode_alg}
\begin{algorithmic}[1]
\Procedure{Spectrode}{}
\BState \textbf{input}
\State $t_1, \ldots, t_J \gets$ positive eigenvalues
\State $w_1, \ldots, w_J \gets$ weights $w_i>0, \sum w_i = 1$
\State $\gamma \gets $ aspect ratio $\gamma \neq 1$
\State $\varepsilon \gets$ precision parameter
\BState \textbf{begin}
\State With accuracy $\varepsilon>0$, find all intervals $(a_k,b_k)$ where $z(v)$ \eqref{ST_inverse} is increasing ($a_k<b_k<a_{k+1}$)
\State Define the support intervals $[\hat l_k(\varepsilon),\hat u_k(\varepsilon)]$:
\If{$\gamma <1$}  
\State set $\hat l_k(\varepsilon) = z(b_{k})$ and $\hat u_k(\varepsilon) = z(a_{k+1})$ for all $k \le J-1$
\Else 
\State set $\hat l_1(\varepsilon) = z(b_J)$, $\hat l_k(\varepsilon) = z(b_{k-1})$ for all $2 \le k \le J-1$, and $\hat u_k(\varepsilon) = z(a_{k})$ for all $k \le J-1$
\EndIf
\State Set $\hat K(\varepsilon)$ to the number of intervals $[\hat l_k(\varepsilon), \hat u_k(\varepsilon)]$
\For{intervals $[\hat l_k(\varepsilon), \hat u_k(\varepsilon)]$}
\State Approximate by $\hat v_k$ the value $v_k = v(\hat l_k(\varepsilon)+ i \delta)$; $\delta = \varepsilon^2$ using FPA (Alg \ref{fp.alg}) with accuracy $\eta = \varepsilon$.
\State Define a uniform grid $\hat l_k = x_{k0} < \ldots < x_{kM} = \hat u_k$ with $\lceil\varepsilon^{-1/2}\rceil$ elements
\State Solve the ODE \eqref{ode} starting at $\hat v_k$ to find the values $\hat v(x_{kj}+ i \delta)$
\State Compute $\hat f(x_{kj},\varepsilon) = \mathrm{Imag}\{\hat m(x_{kj}+ i \delta)\}/\pi$, with  $\hat m$ from \eqref{dual.ST}
\EndFor
\BState \textbf{return}  $\hat K(\varepsilon)$; support intervals $[\hat l_k(\varepsilon), \hat u_k(\varepsilon)]$. Within estimated support intervals, define $\hat f(x)$ by linear interpolation. Outside the estimated support define $\hat f(x)=0$.
\EndProcedure
\end{algorithmic}
\label{MP_solve}
\end{algorithm}

\subsection{Correctness of \sc{Spectrode}}

\textsc{Spectrode} has an user-adjustable accuracy parameter $\varepsilon>0$. Here we show that as $\varepsilon\to0$, the output of the algorithm converges to the correct limiting values. 

\textsc{Spectrode} has the following two main steps:

\begin{enumerate}
\item Find the support of the distribution, as a union of compact intervals.
\item Compute an approximation of the density on the intervals inside the spectrum.
\end{enumerate}

We will analyze these two parts separately, and  give our main results in Theorems \ref{endpoints_accurate} and \ref{density_approx}. We will focus on the case $\gamma <1$; the case $\gamma>1$ is similar and therefore omitted.  

\subsubsection{Summary of Silverstein and Choi's results}

We rely in an essential way on the results of Silverstein and Choi in \cite{silverstein1995analysis}. For the reader's convenience, we summarize below the results we need.  For any cumulative distribution function $G$ on the real line, define the complement of the support of $G$, $S_G^c$, by $S_G^c = \{ x \in \mathbb{R}: \text{ there is an open neighborhood $N$ of $x$, such that $G(x)$ is constant on $N$} \}.$

The support of a distribution function $G$ is defined as $S_G = \mathbb{R} \setminus S_G^c$. The companion distribution function $\underline{F}$ is the limit ESD of $n^{-1} \mathbf{X} \mathbf{X}^\top$, and satisfies

\begin{equation}
\underline{F} = \gamma F + (1-\gamma) I_{[0,\infty)}.
\label{companion}
\end{equation}

We recall some claims, some of them stated informally earlier in the paper. 
\begin{lemma}[Silverstein and Choi \cite{silverstein1995analysis}] Let $F$ be the limit ESD of covariance matrices with limit PSD $H$, and aspect ratio $\gamma<1$. It holds that:
\begin{enumerate}
\item $F$ has a continuous density $f(x)$ for all $x$.

\item $f$ is analytic in the neighborhood of any point $x$ such that $f(x)>0$.
\item Let $B = \{m \in \mathbb{R}: m\neq0, -m^{-1} \in S_H^c\}$. Then $m$ belongs to $B$ iff $z(m)$ belongs to $S_{\underline F}^c$ and $z'(m)>0$. This characterizes the support of $\underline F$ and thus also that of $F$.

\item Suppose the PSD is an atomic distribution with $J$ point masses. The number of disjoint intervals $(a,b)$ in $S_F^c$ such that $a,b \in S_F$ is at most $J-1$. Therefore the support is the union of at most $J$ disjoint compact intervals. 
\end{enumerate}
\label{silv_choi}
\end{lemma}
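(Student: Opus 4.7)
The plan is to work directly from the Silverstein equation \eqref{silv.eq} and exploit the asymmetry between its two sides: the map $v \mapsto z(v)$ in \eqref{ST_inverse} is an explicit rational function, whereas $z \mapsto v(z)$ is only implicitly defined as the unique analytic solution on $\mathbb{C}^+$ with positive imaginary part. The overall strategy is to transfer analytic information from the explicit direction to the implicit direction, and then read off properties of the density via the Stieltjes inversion formula \eqref{stieltjes_to_density}.

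For claim (1), I would first show that for every $x_0 \in \mathbb{R}$ the boundary value $v(x_0) := \lim_{\varepsilon \downarrow 0} v(x_0 + i\varepsilon)$ exists. The argument is a normal-families argument: $\{v(\,\cdot\, + i\varepsilon)\}_{\varepsilon>0}$ is locally uniformly bounded on $\mathbb{C}^+$ because \eqref{silv.eq} forces $v$ to stay away from $0$ and from the real poles $-1/t_i$; any subsequential limit satisfies the implicit equation at $x_0$ and is pinned down by uniqueness, so the limit exists and is continuous in $x_0$. Continuity of $f$ then follows from \eqref{dual.ST} together with \eqref{stieltjes_to_density}. For claim (2), at any $x$ with $f(x)>0$ one has $\mathrm{Imag}(v(x))>0$, so in particular $v(x) \ne -1/t_i$ for each $i$. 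The function $G(v,z) := z + 1/v - \gamma\sum_i w_i t_i/(1+t_iv)$ is jointly holomorphic near $(v(x),x)$, and $\partial_v G$ is the reciprocal of $\mathcal{F}(v(x))$ from \eqref{ode}, which is nonzero on $\mathbb{C}^+$. The analytic implicit function theorem then yields analytic continuation of $v$ across the real axis, hence of $f$.

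For claim (3), the crux is the equivalence between $x \in S_{\underline F}^c$ and the existence of a real $m$ with $-m^{-1} \in S_H^c$, $z(m)=x$, and $z'(m)>0$. For the ``only if'' direction, parts (1)--(2) produce a real-analytic extension of $v$ on a neighborhood of $x$ with $\mathrm{Imag}(v) \equiv 0$ there; taking $m := v(x)$ lands in $B$ and satisfies $z(m)=x$, while differentiating the identity $z(v(\cdot))=\mathrm{id}$ gives $v'(x)=1/z'(m) \in \mathbb{R}$, and a sign analysis of $\mathrm{Imag}\,v(x+i\varepsilon)$ as $\varepsilon\downarrow 0$ forces $z'(m)>0$ (otherwise $v$ would be pushed off the real axis on the wrong side). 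For the ``if'' direction, I would use the classical analytic inverse function theorem on $z$ near $m$, which is applicable because $z'(m)\ne 0$, to construct a holomorphic inverse agreeing with $v$; the condition $z'(m)>0$ ensures that this inverse maps a neighborhood of $x$ in $\mathbb{C}^+$ into $\mathbb{C}^+$, so by uniqueness it agrees with $v(z)$ there, and hence $x$ lies in a neighborhood on which $v$ has no imaginary part on the real axis, i.e. $x \in S_{\underline F}^c$.

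For claim (4), I would observe that $z(v)$ is rational with $J+1$ simple poles (at $v=0$ and $v=-1/t_i$), and that on each of the $J+1$ open real intervals between consecutive poles, $z$ is smooth and runs between $\pm\infty$. By claim (3), each such inter-pole interval contributes at most one maximal increasing sub-interval whose image lies in $S_{\underline F}^c$, and a counting argument on the ``rightmost'' interval (which maps to a neighborhood of $\infty$ rather than to a bounded gap) reduces the $J+1$ candidates to $J-1$ bounded gaps between support intervals. The main obstacle I anticipate is the bookkeeping in this last step: one has to rule out spurious critical points of $z$ lying inside decreasing regions, and keep careful track of which inter-pole intervals contribute an actual bounded component of $S_{\underline F}^c$ versus an unbounded tail. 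This monotonicity accounting on an explicit rational function is the technical heart of the argument.
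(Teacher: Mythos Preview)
The paper does not supply its own proof of this lemma: it is stated as a summary of results quoted from Silverstein and Choi \cite{silverstein1995analysis}, with no argument given in the present paper. So there is no in-paper proof to compare your proposal against; your sketch is effectively a reconstruction of the original Silverstein--Choi arguments, and in broad outline it matches their approach (boundary values of $v$ via compactness/uniqueness, analyticity via the implicit function theorem applied to $G(v,z)$, and the support characterization via monotonicity of the explicit inverse $z(v)$).

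Two small points on your sketch. First, in part (1) the normal-families argument needs more than ``$v$ stays away from $0$ and from $-1/t_i$'': you need a uniform bound on $|v(x+i\varepsilon)|$ as $\varepsilon\downarrow 0$, which in Silverstein--Choi comes from a direct estimate using the imaginary part of the Silverstein equation rather than from a soft compactness statement. Second, in part (4) your pole/interval count is slightly off: $J+1$ real poles partition $\mathbb{R}$ into $J+2$ intervals, not $J+1$. The substantive issue you flag---that each inter-pole interval contributes at most one maximal increasing sub-interval---is exactly the point that requires work; it is handled in the original by a convexity/monotonicity analysis of $z'$ on each such interval, and your acknowledgment that this is ``the technical heart'' is accurate.
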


The following  is a restatement of Lemma 6.2 in \cite{bai2009spectral}, see also Theorem 7.5 in \cite{couillet2011random}.

\begin{lemma}[consequences of \cite{silverstein1995analysis}, see Lemma 6.2 in \cite{bai2009spectral}] Suppose the PSD $H$ is an atomic distribution with $t_i>0$ for all $i=1,\ldots,J$, and $\gamma < 1$. Then 

\begin{enumerate}
\item $F$ is compactly supported.
\item The density $f(x)$ equals zero in some right-neighborhood $(0,a)$ of 0.
\item Suppose $(c,d)$ is an interval where $z'(v)>0$, but $z'(c) = z'(d)=0$, and $z'(v) <0$ in some neighborhoods $(c-\delta,c),(d,d+\delta)$. Then the interval $(z(c),z(d))$ forms one connected component of $S_F^c$, and all maximal compact intervals in $S_F^c$ have this form.
\item $z'(v)>0$ for all $v \in (0,\infty)$. Further, $z(v) <0$ for all $v \in (0,\infty)$. 
\item  Define $D = -1/\min_j{t_j}<0$.  There is a finite constant $b<D$, such that $z'(b)=0$, while $z'(v)>0$ for $v \in (-\infty,b)$ and $z'(v)<0$ for $v \in (b,D)$. Then $l_1 = z(b)$ is the lowest endpoint in the support of $F$.
\end{enumerate}
\label{easy_lemma}
\end{lemma}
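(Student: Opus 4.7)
My strategy is to combine direct rational-function calculus on
\[
z(v) = -\frac{1}{v} + \gamma \sum_{i=1}^J \frac{w_i t_i}{1+t_i v}
\]
with the Silverstein--Choi support characterization (Lemma~\ref{silv_choi}), and to dispatch the five claims in the order (4), (1)+(2), (5), (3), since they depend on one another in that sequence. For claim (4), the first move is the identity $t_i v/(1+t_i v) = 1 - 1/(1+t_i v)$, which gives $v z(v) = -(1-\gamma) - \gamma \sum_i w_i/(1+t_i v)$. Both summands are negative for $v>0$, yielding $z(v)<0$. Differentiating the same rewrite, $(vz)'(v) = \gamma \sum_i w_i t_i/(1+t_i v)^2 > 0$, so $v z'(v) = (vz)'(v) - z(v) > 0$, whence $z'(v)>0$.

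For (1) and (2), I would combine (4) with the asymptotics of $z$ on the three real intervals $(0,\infty)$, $(-1/t_J, 0)$, and $(-\infty, D)$. By (4) together with $z(0^+) = -\infty$ and $z(+\infty) = 0^-$, $z$ bijects $(0,\infty)$ onto the trivial piece $(-\infty, 0) \subset S_{\underline F}^c$. A short computation shows that on $(-1/t_J, 0)$ every term of $z(v)$ is positive and $z \to +\infty$ at both endpoints, so $z$ attains a positive minimum at some $v^*$ with $z'>0$ on $(v^*,0)$; Lemma~\ref{silv_choi} then gives $(z(v^*), +\infty) \subset S_{\underline F}^c$. Combined with claim (5), which supplies $(0, l_1) \subset S_{\underline F}^c$, these two exterior gaps establish both compact support of $F$ and the vanishing of $f$ on $(0, l_1)$, using \eqref{companion} to translate from $\underline F$ to $F$ for positive $x$.

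For claim (5), set $\phi(v) = \gamma v^2 \sum_i w_i t_i^2/(1+t_i v)^2 = \gamma \sum_i w_i \bigl(t_i v/(1+t_i v)\bigr)^2$; then $z'(v)=0$ iff $\phi(v)=1$. On $(-\infty, D)$ every $1+t_i v < 0$ (since $-1/t_i \ge D$ for all $i$), so the formula $\phi'(v) = 2\gamma v \sum_i w_i t_i^2/(1+t_i v)^3$ is a product of two negative factors and is therefore strictly positive. Since $\phi(v) \to \gamma < 1$ as $v \to -\infty$ and $\phi(D^-) = +\infty$, strict monotonicity of $\phi$ pins down a unique $b \in (-\infty, D)$ with $\phi(b)=1$, and the sign of $z'$ on each side of $b$ is then immediate. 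Lemma~\ref{silv_choi} identifies $l_1 = z(b)$ as the leftmost support point. I expect this monotonicity of $\phi$ to be the principal technical hurdle of the lemma; all other parts reduce to algebra or to direct invocations of SC.

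Finally, for claim (3): on each maximal open interval $(c,d)$ where $z'>0$, Lemma~\ref{silv_choi} says $z$ is a monotone homeomorphism onto $(z(c), z(d)) \subset S_{\underline F}^c$; the endpoints $z(c), z(d)$ are boundary points of this open set and hence belong to $S_{\underline F}$, so $(z(c), z(d))$ is a maximal compact gap. Conversely every maximal compact component of $S_{\underline F}^c$ must arise this way, because the SC iff-statement provides a unique $v \in B$ with $z'(v)>0$ above every interior point; continuity then extends this parameterization to a full interval with vanishing $z'$ at the endpoints. The only non-trivial bookkeeping is to ensure these inverse branches glue consistently across poles of $z$, which is routine once (5) is in hand.
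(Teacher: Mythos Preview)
The paper does not prove this lemma at all: it is stated as a restatement of Lemma~6.2 in Bai--Silverstein's monograph (with a cross-reference to Couillet--Debbah), and the text moves on immediately. So there is no ``paper's own proof'' to compare against; your proposal already goes well beyond what the paper supplies.

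On the substance of your sketch: the arguments for (4) and (5) are clean and correct. The identity $vz(v)=-(1-\gamma)-\gamma\sum_i w_i/(1+t_iv)$ and the derivative trick $vz'=(vz)'-z$ dispatch (4) efficiently. For (5), your monotonicity of $\phi(v)=\gamma\sum_i w_i\bigl(t_iv/(1+t_iv)\bigr)^2$ on $(-\infty,D)$ via the sign of $\phi'(v)=2\gamma v\sum_i w_it_i^2/(1+t_iv)^3$ is correct: on that ray every $1+t_iv<0$, so each summand is negative and the prefactor $2\gamma v$ is negative. Combined with $\phi(-\infty)=\gamma<1$ and $\phi(D^-)=+\infty$, this pins down $b$ uniquely. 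Your derivation of (1) and (2) from (4), (5), and the behaviour on $(-1/t_J,0)$ is also sound; note in passing that $z(v)\to 0^+$ as $v\to-\infty$ (since $z(v)\sim(\gamma-1)/v$), which confirms $l_1=z(b)>0$.

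For (3), the forward direction is a direct read of Lemma~\ref{silv_choi}. The converse you label ``routine bookkeeping'' does work, but be a bit more explicit than your sketch suggests: for a bounded gap $(\alpha,\beta)\subset S_{\underline F}^c$, the map $x\mapsto m_{\underline F}(x)$ is real-analytic on the gap and inverts $z$, so its image is a single interval $(c,d)$ with $z'>0$; boundedness of the gap forces $c,d$ to be finite and distinct from the poles $-1/t_j$ (else $z$ would blow up), whence $z'(c)=z'(d)=0$ by continuity together with the Silverstein--Choi ``iff''. The side condition $z'<0$ on $(c-\delta,c)$ and $(d,d+\delta)$ follows because otherwise $z$ would remain monotone past $c$ or $d$ and the gap would extend, contradicting maximality. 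None of this requires gluing across poles; each bounded gap sits inside a single pole-free interval.
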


Therefore, the support of $F$ is the union of some intervals $[l_i,u_i]$, where  $0<l_1<u_1<\ldots<l_K<u_K$. The density $f(x)=0$ on $(0,l_1]$, $[u_i,l_{i+1}]$ for all $i$, and $[u_K,\infty)$. Within the intervals $[l_i,u_i]$, the density $f(x)$ is usually strictly positive. However, there are cases in which the density $f(x)=0$ at isolated points within $[l_i,u_i]$. This can happen if $[l_i,u_i]$ arises from a ``merge'' when two intervals corresponding to neighboring population eigenvalues ``just touch''. We emphasize that in this case we consider $[l_i,u_i]$ as one component of the support of $F$. 

\subsubsection{Correctness of the support}

Here we explain in detail the steps to find the support of $F$, and prove their correctness:

\begin{theorem}{\bf Correctness of $\hat{K}(\varepsilon)$,  $\hat{l}_k(\varepsilon)$, and $\hat{u}_k(\varepsilon)$: }
Consider the numerical approximations $\hat{K}(\varepsilon)$,  $\hat{l}_k(\varepsilon)$, $\hat{u}_k(\varepsilon)$ outlined in Algorithm \eqref{ode_alg} and described in detail below. Then, as $\varepsilon \to 0$:
\begin{enumerate}
\item The number of disjoint intervals is correctly identified: $\lim _{\varepsilon \to 0} \hat{K}(\varepsilon) = K. $
\item The endpoints of the support are accurately approximated: $\lim _{\varepsilon \to 0} \hat l_k(\varepsilon) = l_k$, $\lim _{\varepsilon \to 0} \hat u_k(\varepsilon) = u_k$. 
\end{enumerate}
\label{endpoints_accurate}
\end{theorem}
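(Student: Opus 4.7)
My plan is to reduce the correctness statement to two components: (i) the finite-precision identification of the critical points of the real rational function $z(v)$ from (\ref{ST_inverse}), and (ii) continuity of $z$ transferring this accuracy to the support endpoints via the Silverstein--Choi characterization in Lemma \ref{silv_choi} together with Lemma \ref{easy_lemma}.

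First, I would make precise the structural correspondence. Differentiation of (\ref{ST_inverse}) yields
$$
z'(v) \;=\; \frac{1}{v^{2}} \;-\; \gamma\sum_{i=1}^{J}\frac{w_{i}t_{i}^{2}}{(1+t_{i}v)^{2}},
$$
a rational function whose real poles lie only at $\{0,-1/t_{1},\ldots,-1/t_{J}\}$ and whose numerator (after clearing denominators) is a polynomial of fixed degree. Consequently $z'$ has finitely many real zeros $v_{1}<v_{2}<\cdots<v_{N}$ with a uniform positive gap $\delta_{*}:=\min_{j}(v_{j+1}-v_{j})$. By Lemmas \ref{silv_choi}(3) and \ref{easy_lemma}, the maximal open intervals $(c,d)$ on which $z'>0$, together with the unbounded leftmost interval $(-\infty,b)$ from Lemma \ref{easy_lemma}(5), are in bijection with the $K$ maximal compact intervals in the support of $F$, under the map $(c,d)\mapsto(z(c),z(d))$ giving a complementary gap, and $b\mapsto l_{1}=z(b)$ giving the leftmost support endpoint.

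Second, I would argue finite-precision correctness of the bracketing. The subroutine in Algorithm \ref{ode_alg} that identifies intervals of monotonic increase can be implemented by sampling $\mathrm{sign}(z'(v))$ on a grid of spacing $\varepsilon$ over a bounded interval containing all zeros and poles of $z'$ (truncation is justified using Lemma \ref{easy_lemma}(5) and the form of $z$ at infinity), then bisecting each detected sign change to width $\varepsilon$. At any simple real zero of $z'$ the sign changes in its neighborhood, so once $\varepsilon<\delta_{*}/2$ every such zero is bracketed exactly once, and the brackets $(\hat{a}_{k}(\varepsilon),\hat{b}_{k}(\varepsilon))$ satisfy $|\hat{a}_{k}(\varepsilon)-c_{k}|+|\hat{b}_{k}(\varepsilon)-d_{k}|\le 2\varepsilon$. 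This yields $\hat{K}(\varepsilon)=K$ for all sufficiently small $\varepsilon$, proving (\ref{recover_num_disjoint}). Since each $c_{k},d_{k}$ is bounded away from the poles of $z$, the map $z$ is Lipschitz on a neighborhood of each critical point, so $z(\hat{a}_{k}(\varepsilon))\to z(c_{k})$ and $z(\hat{b}_{k}(\varepsilon))\to z(d_{k})$; combined with the endpoint-assignment step of Algorithm \ref{ode_alg}, this gives $\hat{l}_{k}(\varepsilon)\to l_{k}$ and $\hat{u}_{k}(\varepsilon)\to u_{k}$, proving (\ref{compute_endpoints}).

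The main obstacle I anticipate is the degenerate configuration where two adjacent critical points of $z$ coalesce into a double zero, corresponding exactly to the ``just-touching'' merge described after Lemma \ref{easy_lemma}: at such $H$ the function $z'$ vanishes without sign change, so sign-based bracketing may miss the critical point and produce $\hat{K}(\varepsilon)\neq K$. I would address this by noting that the support definition itself treats a touching pair as a single compact component, so the counting convention stays consistent as long as the algorithm also treats such tangencies as non-splits; a robust treatment verifies simplicity of each detected zero via the sign of $z''$ (or the lowest-order non-vanishing derivative), and in the generic case where all zeros of the numerator polynomial are simple, the estimate $|\hat{a}_{k}(\varepsilon)-c_{k}|+|\hat{b}_{k}(\varepsilon)-d_{k}|\le 2\varepsilon$ above is unaffected.
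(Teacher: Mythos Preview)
Your proposal is correct and takes essentially the same approach as the paper: both invoke the Silverstein--Choi characterization of the support via the increasing intervals of $z(v)$ (Lemmas~\ref{silv_choi} and~\ref{easy_lemma}), locate the sign changes of $z'$ by sampling on a grid whose spacing vanishes with $\varepsilon$, and then transfer convergence to the endpoints by continuity of $z$. The paper organizes this into two lemmas (one for the leftmost endpoint on $(-\infty,D)$, one for the remaining endpoints on $(D,0)$) and argues qualitatively that ``for sufficiently small $\varepsilon$ each switch in the signs will correspond to exactly one such interval $(c,d)$, because there are finitely many true switches,'' whereas you make this quantitative via the minimum gap $\delta_{*}$ and add a bisection refinement and an explicit treatment of the degenerate double-zero case---but the logical skeleton is the same.
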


This theorem is a consequence of Lemmas \ref{accurate_1_endpoint_lemma} and \ref{accurate_endpoints_lemma} below. 

Our strategy, following Lemmas  \ref{silv_choi} and \ref{easy_lemma}, is to find the intervals where $z$ is increasing; or specifically the points where it switches monotonicity. Once we find such a switch point $\hat a$, we will define $z(\hat a)$ as an approximate endpoint. In detail, by Lemma \ref{easy_lemma}, claim 5, $l_1 = z(b)$ is the lowest endpoint in the support of $F$, where $b$ is the largest point such that $z$ is increasing on $(-\infty,b)$. Therefore, in our algorithm \eqref{ode_alg}, line 4, we set the leftmost interval where $z$ is increasing as $(a_1,b_1)$, with $a_1 = -\infty$, and $b_1 = \hat b$, where $\hat b(\varepsilon)$ is the numerical estimate of $b$ found below. 

The numerical estimate of $b$ is found in the following way. Recall $D = -1/\min_j{t_j}<0$ and consider a grid $u_0 <u_1<\ldots<u_N = B$ depending on $\varepsilon$. Let $h(\varepsilon)$ be a function such that $\max_i |u_{i+1}-u_i| \le h(\varepsilon)$, and $1/|u_0| <h(\varepsilon)$. Find the smallest index $i$ such that $z'(u_i)<0$ and let $\hat b(\varepsilon) = u_i$. If there is no such index, then let $i=N$. Define $\hat  l_1(\varepsilon)  = z(\hat b(\varepsilon))$ as a numerical approximation of $l_1 = z(b)$. The following lemma ensures the convergence of this procedure.

\begin{lemma} {\bf Correctness of $\hat{l}_1(\varepsilon)$: }
Suppose the function $h(\varepsilon)$ has limit $\lim_{\varepsilon \to 0}h(\varepsilon)=0$. Then  $\lim _{\varepsilon \to 0} \hat l_1(\varepsilon) = l_1$.
\label{accurate_1_endpoint_lemma}
\end{lemma}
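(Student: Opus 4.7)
The plan is to reduce the convergence $\hat l_1(\varepsilon) \to l_1$ to two ingredients: continuity of $z$ at $b$, and convergence $\hat b(\varepsilon) \to b$ of the grid-based approximation to the switch point. Given Lemma~\ref{easy_lemma}, claim (5), the second ingredient becomes a simple sandwich argument that exploits the sign pattern of $z'$ on $(-\infty, D)$.

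First I would dispose of the continuity ingredient. The real singularities of $z(v)= -1/v + \gamma\sum_i w_i t_i/(1+t_i v)$ occur only at $v=0$ and $v=-1/t_i$, and since $t_i \ge \min_j t_j$ one has $-1/t_i \ge D$ for every $i$. Because $b<D$ by Lemma~\ref{easy_lemma}(5), $z$ is real-analytic (in particular continuous) in an open neighborhood of $b$. Thus it suffices to show $\hat b(\varepsilon) \to b$.

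For the convergence of $\hat b(\varepsilon)$, the hypothesis $1/|u_0| < h(\varepsilon) \to 0$ forces $u_0 \to -\infty$, and as part of the algorithmic prescription one expects $B$ to approach $D$ from below; hence for all $\varepsilon$ small enough $u_0 < b < B$. Combined with Lemma~\ref{easy_lemma}(5), namely $z'>0$ on $(-\infty,b)$ and $z'<0$ on $(b,D)$, this guarantees $z'(u_0)>0$ and $z'(u_N) < 0$, so the smallest index $i^*$ with $z'(u_{i^*})<0$ exists and satisfies $i^* \ge 1$. By the sign pattern, $u_{i^*}\in(b,D)$, and by minimality of $i^*$ together with $z'(v) \ge 0$ for $v \le b$, we get $u_{i^*-1} \le b$. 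Therefore
\[
u_{i^*-1} \le b < u_{i^*}=\hat b(\varepsilon), \qquad |\hat b(\varepsilon)-b|\le u_{i^*}-u_{i^*-1}\le h(\varepsilon)\to 0.
\]
Combining with the continuity of $z$ at $b$ yields $\hat l_1(\varepsilon)= z(\hat b(\varepsilon)) \to z(b)= l_1$.

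The main (minor) obstacle is ensuring $B>b$ for small $\varepsilon$: the excerpt does not pin $B$ down precisely, but since $b$ is a fixed constant strictly less than $D$, any reasonable construction in which $B$ approaches $D$ from below (or is merely chosen once and for all in $(b,D)$) will eventually satisfy $B>b$, and the sandwich argument closes. Everything else is a routine appeal to Lemmas~\ref{silv_choi} and \ref{easy_lemma}.
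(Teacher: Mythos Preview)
Your proposal is correct and follows essentially the same route as the paper: both arguments use the sign pattern of $z'$ on $(-\infty,D)$ from Lemma~\ref{easy_lemma}(5) to sandwich $b$ between consecutive grid points $u_{i^*-1}\le b < u_{i^*}=\hat b(\varepsilon)$, bound $|\hat b(\varepsilon)-b|\le h(\varepsilon)$, and then invoke continuity of $z$ on $(-\infty,D)$ to conclude. Your version is slightly more explicit about locating the singularities of $z$ and about the role of the upper grid endpoint $B$, but the underlying argument is the same.
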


\begin{proof}As  $h(\varepsilon) \to 0$, for sufficiently small $\varepsilon$ we will have $u_0 < b < u_N$. Therefore, for some $i$ it will be true that $u_{i-1} \le b < u_i$. By claim 5 of Lemma \ref{easy_lemma}, this will be the smallest index $u_i$ for which $z'(u_i)<0$, and hence $\hat b(\varepsilon) = u_i$. Now by assumption $u_i- u_{i-1} \le h(\varepsilon)$, hence $|\hat b(\varepsilon) - b| \le h(\varepsilon)$. 

This shows that as $\varepsilon\to0$, we have $\hat b(\varepsilon) \to b$. By inspection, $z$ is continuous on $(-\infty,D)$, hence $\hat  l_1(\varepsilon) = z(\hat b(\varepsilon)) \to z(b) = l_1$, as claimed.
\end{proof}
In practice, we choose the grid $u_1,\ldots,u_N$ using an iterative doubling. We first set it to be an equi-spaced grid on $[D-1,D-\varepsilon]$ with $N(\varepsilon) = \lfloor 1/\varepsilon \rfloor$ elements. If this grid doesn't contain an element with $z'(u_i)>0$, we switch to a grid on $[D-2, D-\varepsilon/2]$ with $2N(\varepsilon)$ elements. We iterate this process until we find an index with $z'(u_i)>0$ and proceed as above. This procedure implicitly defines $h(\varepsilon) \approx \varepsilon$, but more general choices also work as long as $\lim_{\varepsilon \to 0}h(\varepsilon)=0$. 

We have shown that the numerical approximation to the lowest endpoint of $F$ converges. Similarly, we define the remaining endpoints. By Lemma \ref{easy_lemma} claim 4, there is no need to look at the interval $(0,\infty)$. Let us define a new grid $D = y_0 < y_1 <\ldots<y_M=0$ depending on $\varepsilon$, such that $\max_i |y_{i+1}-y_i| \le h(\varepsilon)$. Here $h$ is again a function such that $\lim_{\varepsilon \to 0}h(\varepsilon)=0$, and in our implementation we choose $h(\varepsilon) \propto \varepsilon^{1/2}$. Without loss of generality we can assume that the $y_i$, $i \in 1,\ldots, M-1$, are disjoint from the finitely many values $-1/t_j$, so that $z, z'$ are well-defined on the grid; but see the practical note at the end of this section. 

Consider the sign of the sequence $z'(y_j)$. Assume without loss of generality that the smallest $t_i$ is $t_1$. Clearly for $v$ near $D=-1/t_1$, the dominating term in $z'$ is $-w_1\{t_1v/(1 + t_1v)\}^2$, and this tends to $-\infty$ as $v \to D, v>D$. Hence, for $y_1$ sufficiently close to $D$, we can assume $z'(y_1)<0$. The sequence $z'(y_j)$ then starts out negative. Denote by $i_1$ the first index where it switches sign. Define the sequence of grid indices $i_1 < i_2  < \ldots$ inductively, with $i_{k+1}$ the first index $j>i_k$ where the sign of $z'(y_{j})$ differs from the sign of $z'(y_{i_{k}})$. 

Then, let us define for all $k$, $a_k = y_{i_{2k-1}}$, $b_k = y_{i_{2k}-1}$. Thus $a_k,\ldots,b_k$ are the ranges of indices where $z'>0$. As described in line 5 of Algorithm \eqref{ode_alg}, set $\hat l_k(\varepsilon) = z(b_{k})$ for $2 \le k \le J-1$ and $\hat u_k(\varepsilon) = z(a_{k+1})$ for all $k \le J-1$. Finally, set $\hat K(\varepsilon)$ as the number of intervals $(a_k,b_k)$ constructed. The lemma below ensures the convergence of this procedure.

\begin{lemma} {\bf Correctness of $\hat{K}(\varepsilon)$,  $\hat{l}_k(\varepsilon)$, ($k \ge 2$), and $\hat{u}_k(\varepsilon)$: }
Suppose the function $h(\varepsilon)$ has limit $\lim_{\varepsilon \to 0}h(\varepsilon)=0$. Then the number of intervals is correctly identified in the high-precision limit: $\lim _{\varepsilon \to 0} \hat{K}(\varepsilon) = K$. Further, in addition to $l_1$, the approximations to the other endpoints of the support converge to the right answers: $\lim _{\varepsilon \to 0} \hat l_k(\varepsilon) = l_k$ for all $k \ge 2$, and $\lim _{\varepsilon \to 0} \hat u_k(\varepsilon) = u_k$ for all $k$.
\label{accurate_endpoints_lemma}
\end{lemma}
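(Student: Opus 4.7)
The plan is to extend the argument of Lemma \ref{accurate_1_endpoint_lemma} by tracking sign changes of the discrete sequence $\{\mathrm{sign}(z'(y_j))\}_{j=0}^{M}$ on the grid inside $(D,0)$ as $h(\varepsilon)\to 0$. Lemmas \ref{silv_choi} and \ref{easy_lemma} imply that the set $\{v\in (D,0)\setminus\{-1/t_2,\ldots,-1/t_J\}: z'(v)>0\}$ is a finite disjoint union of open intervals which, under the Silverstein--Choi bijection $v\mapsto z(v)$, corresponds to the components of $S_{\underline F}^c$ inside $[0,\infty)$ other than the leftmost gap $(0,l_1)$ (handled in Lemma \ref{accurate_1_endpoint_lemma}) and other than the image of the excluded interval $(0,\infty)$. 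A component count then yields exactly $K$ such increasing intervals, at whose non-infinite endpoints $z$ recovers $\{u_k\}_{k=1}^{K}$ and $\{l_k\}_{k=2}^{K}$.

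The first step is to establish three quantitative, $\varepsilon$-independent structural facts about $z'$ on $(D,0)\setminus\{-1/t_2,\ldots,-1/t_J\}$: (a) every maximal constant-sign interval of $z'$ has positive length, with a lower bound depending only on $H$ and $\gamma$; (b) each pole $-1/t_j$ admits a two-sided neighborhood on which $z'<0$ strictly, of size depending only on $H,\gamma$; (c) the zeros of $z'$ are uniformly bounded away from every pole. Fact (b) is the linchpin and is immediate from the expansion $z'(v)\sim -\gamma w_j t_j^2/(1+t_j v)^2\to -\infty$ as $v\to -1/t_j$; fact (c) follows directly from (b), and fact (a) combines (c) with the observation that each of the $K$ increasing intervals has positive length because its image under $z$ is a non-trivial gap in the support. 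Together (a)--(c) supply a threshold $\eta_0>0$, depending only on $H,\gamma$, below which the grid is fine enough to guarantee the step described next.

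Given $h(\varepsilon)<\eta_0$, every maximal constant-sign interval of $z'$ contains at least one grid point, and both endpoints of every pole-crossing grid cell lie in the negative neighborhood supplied by (b). Consequently the discrete sign sequence exhibits exactly $2K-1$ sign changes in the correct order, and the inductive indexing scheme of Algorithm \ref{ode_alg} produces exactly $K$ intervals $(a_k,b_k)$; hence $\hat K(\varepsilon)=K$. Each detected boundary index further satisfies $|a_k-c_k|\le h(\varepsilon)$ and $|b_k-d_k|\le h(\varepsilon)$, where $(c_k,d_k)$ is the corresponding true increasing interval. Because (c) rules out $c_k, d_k$ being poles of $z$, the map $z$ is continuous there, and so $z(a_k)\to z(c_k)$, $z(b_k)\to z(d_k)$ as $\varepsilon\to 0$; by the bijection from the first step these limits equal the corresponding support endpoints, yielding $\hat u_k(\varepsilon)\to u_k$ for all $k$ and $\hat l_k(\varepsilon)\to l_k$ for $k\ge 2$.

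The main obstacle is ruling out spurious sign changes induced by the poles of $z'$. A priori, a grid cell straddling a pole could register $z'>0$ at one endpoint and $z'<0$ at the other if the grid is coarse, producing a spurious sign change that would both inflate $\hat K$ and misalign the indices used to read off the support endpoints. Fact (b) defuses this: the size of the strictly-negative neighborhood around each pole depends only on $H$ and $\gamma$, so once $h(\varepsilon)$ drops below that uniform scale both endpoints of any pole-crossing cell sit inside the neighborhood, and the algorithm correctly records an uninterrupted run of negative signs across the pole.
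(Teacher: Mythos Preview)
Your proposal is correct and follows essentially the same strategy as the paper: detect sign changes of $z'$ on the grid in $(D,0)$, match them to the true increasing intervals furnished by Lemmas \ref{silv_choi}--\ref{easy_lemma}, and then pass to the limit using continuity of $z$ at the detected boundary points. Your treatment is in fact more careful than the paper's own proof in one respect: the paper simply asserts that ``for sufficiently small $\varepsilon$ each switch in the signs will correspond to exactly one such interval $(c,d)$,'' without explicitly addressing what happens when a grid cell straddles a pole $-1/t_j$; your facts (b) and (c), together with the uniform threshold $\eta_0$, make this step rigorous. The paper sidesteps the issue only by a remark after the proof that the implementation treats each interval $(-1/t_i,-1/t_{i+1})$ separately.
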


\begin{proof}
This is analogous to the previous proposition. Suppose $(c,d)$ is an interval where $z'(v)>0$, but $z'(c) = z'(d)=0$, and $z'(v) <0$ in some neighborhoods $(c-\delta,c),(d,d+\delta)$. By claim 3 of Lemma \ref{easy_lemma}, the interval $(z(c),z(d))$ forms one connected component of the complement of the support of $F$, and all maximal compact intervals in $S_F^c$ have this form. In addition there is an unbounded interval $(u_K,\infty)$ in the complement of $F$, which is the image of the largest interval $(c,0)$ on which $z'>0$. Therefore we must find all increasing intervals, with special attention to the last one. Since we already found $\hat l_1$ in the previous lemma, we exclude the interval $(-\infty,l_1]$.

Consider first an interval $(c,d)$ with the properties above, where $d<0$. Since the grid spacings $|u_{i+1}-u_i|\le h(\varepsilon) \to 0$, for small $\varepsilon$, we will have $c-\delta <y_i < c < y_{i+1}$ for some $i$. Therefore, $z'(y_i)<0$ and $z'(y_{i+1})>0$.  This shows that the sign of the sequence $z'(y_j)$ switches at $i+1$. 

Similarly, the sign of the sequence switches at the index $j+1$ for which $ y_j < d < y_{j+1} < d+ \delta$. For sufficiently small $\varepsilon$ each switch in the signs will correspond to exactly one such interval $(c,d)$, because there are finitely many true switches. Hence our algorithm will choose $a_k = y_{i+1}$, $b_k = y_{j}$.

Then it's clear that $0 \le a_k - c \le y_{i+1} - y_i \le h(\varepsilon)$. Therefore $\lim_{\varepsilon \to 0} a_k(\varepsilon) = c$. Now by claim 3 of Lemma \ref{easy_lemma}, $c$ equals the pre-image under $z$ of the upper endpoint $u_l$ of some support interval, i.e. $z(c) = u_l$. Since each switch in the signs of $z'(y_i)$ corresponds to exactly one interval $(c,d)$, this means that the sorted values $c_1<d_1<c_2<d_2<\ldots$ correspond to the pre-image under $z$ of $u_1<l_2<\ldots$, i.e. $z(c_i)=u_i$, $z(d_{i})=l_{i+1}$ for all $i$.

This shows that $\lim_{\varepsilon \to 0} a_{k+1}(\varepsilon) = z^{-1}(u_k)$ for all $k \le K-1$. It's easy to see that $z$ is continuous at $a_{k+1}$, because the only points of discontinuity are at the values $-1/t_i$. By continuity $\lim_{\varepsilon \to 0} z(a_{k+1}(\varepsilon)) = u_k$, i.e. $\lim_{\varepsilon \to 0} \hat u_k(\varepsilon)= u_k$, for $k \le K-1$, as required. Similarly $\lim _{\varepsilon \to 0} \hat l_k(\varepsilon) = l_k$ for all $k \ge 2$.

To show that the highest support endpoint converges, i.e., $\lim_{\varepsilon \to 0} \hat u_K(\varepsilon)= u_K$, we must study the largest interval $(c,0)$ where $z'>0$. The reason is that, as it's easy to see, $z'>0$ in a small neighborhood $(-\eta,0)$ of zero. Therefore, by Lemma \ref{easy_lemma}, the upper endpoint of the support of $F$ will be the smallest $c$ such that $z'>0$ on $(c,0)$. The proof of the convergence in this case is very similar to the analysis presented above, and therefore omitted.
\end{proof}

Lemmas \ref{accurate_1_endpoint_lemma} and \ref{accurate_endpoints_lemma} together imply Theorem  \ref{endpoints_accurate}. 
Our practical implementation of the algorithm is a bit more involved. We consider all intervals $J_i = (-1/t_i,-1/t_{i+1})$ one-by-one. It is beneficial to break down the search for increasing intervals to separate intervals $J_i$ because  - as it is easy to see - $z(v)$ has at most one increasing interval within each $J_i$. Further, $z(v)$ has no singularities within any $J_i$. These properties ensure added stability for finding the support.

\subsubsection{Correctness of the density}
\label{accurate_density}

In this section we prove the accurate computation of the density: 

\begin{theorem}{\bf Correctness of $\hat{f}(x,\varepsilon)$:}
Consider the numerical approximation to the density $\hat{f}(x,\varepsilon)$, outlined in Algorithm \eqref{ode_alg} and described in detail below. Then, as $\varepsilon \to 0$, the approximation converges to the true density uniformly over all $x$: $\sup_{x\in \mathbb{R}}|\hat{f}(x,\varepsilon) - f(x)| \to 0$. 
\label{density_approx}
\end{theorem}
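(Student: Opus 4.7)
The plan is to decompose the pointwise error $|\hat f(x,\varepsilon)-f(x)|$ into four pieces, and bound each uniformly in $x$. Writing $m_\delta(x)=m(x+i\delta)$ and $f_\delta(x)=\mathrm{Imag}(m_\delta(x))/\pi$ with $\delta=\varepsilon^2$, the chain of approximations is
\begin{equation*}
f(x)\;\longleftarrow\; f_\delta(x)\;\longleftarrow\;\tilde f_\delta(x):=\mathrm{Imag}(m_{\text{ODE}}(x))/\pi\;\longleftarrow\;\hat f(x,\varepsilon),
\end{equation*}
where $m_{\text{ODE}}$ is obtained from the exact ODE solution on the grid $x_{kj}+i\delta$ started at the \emph{true} value $v(\hat l_k(\varepsilon)+i\delta)$, and $\hat f$ is the piecewise linear interpolant of the numerical ODE output started at the FPA-approximation $\hat v_k$. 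The four corresponding error terms are: (i) the Stieltjes smoothing gap $\|f_\delta-f\|_\infty$; (ii) the error from interpolating off the grid (inside estimated support intervals); (iii) propagation of the FPA starting-value error through the ODE; and (iv) the support-identification error (the symmetric difference between the estimated support and the true support).

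For (i), since $\gamma<1$ guarantees that $f$ is continuous and compactly supported (Lemma~\ref{easy_lemma}), the Stieltjes inversion formula \eqref{stieltjes_to_density} combined with a standard Poisson-kernel argument yields uniform convergence $\|f_\delta-f\|_\infty\to 0$ as $\delta\to 0$; I would invoke (or sketch) the elementary bound based on the modulus of continuity of $f$ on its compact support. For (iii), I would use the fact that on the line $\mathrm{Imag}(z)=\delta>0$, the Stieltjes transform $v$ stays bounded away from the poles $-1/t_i$ and from $0$, so the right-hand side $\mathcal{F}$ of the ODE \eqref{ode} is Lipschitz with some constant $L=L(\delta)$. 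By Gr\"onwall, the ODE flow propagates the initial error by a factor at most $e^{LT}$ where $T$ is the length of the support interval. The FPA starting error is at most $\eta=\varepsilon$ by the convergence result (Lemma~\ref{ode_unique}, referenced in the algorithm), so the propagated error is $O(\varepsilon\, e^{L(\delta)T})$. Choosing $\delta=\varepsilon^2$ still keeps this quantity tending to zero, provided $L(\delta)$ grows no faster than $\log(1/\varepsilon)/T$; this polynomial-in-$\delta$ growth is the generic behavior and can be verified by differentiating $\mathcal{F}$ and bounding each summand $w_i t_i^2/(1+t_iv)^2$ using $|1+t_i v|\ge t_i\delta$.

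For (ii), the grid inside each support interval has spacing $h=O(\varepsilon^{1/2})$. Since $v$ (and therefore $m$) is analytic, hence Lipschitz, on any horizontal line $\{x+i\delta:x\in[\hat l_k,\hat u_k]\}$ with Lipschitz constant $O(1/\delta)$, the linear interpolant of the ODE values approximates $f_\delta$ pointwise with error $O(h/\delta)=O(\varepsilon^{1/2}/\varepsilon^2)=O(\varepsilon^{-3/2})$. This crude bound is not enough; here lies \textbf{the main obstacle}: one must show that the total error $\|\hat f-f\|_\infty$ still tends to zero despite the competition between the smoothing parameter $\delta$ (small $\delta$ needed for (i), large $\delta$ needed for (ii)). The resolution is to use the smoothness of $f_\delta$ more carefully: the derivative of $f_\delta$ in $x$ is uniformly bounded on compacts provided we stay a fixed distance from the singularities, so the interpolation constant is really $\|\partial_x f_\delta\|_\infty$, which by the Cauchy integral formula applied to the analytic extension of $v$ is $O(1)$ except in a shrinking neighborhood of the true edges $l_k,u_k$. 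In that neighborhood the edge behavior $f(x)\sim\sqrt{|x-l_k|}$ of Lemma~\ref{easy_lemma} provides a H\"older-$1/2$ bound, giving interpolation error $O(h^{1/2})=O(\varepsilon^{1/4})\to 0$. Combining the two regimes (bulk and edge) via a partition of the interval $[\hat l_k,\hat u_k]$ yields uniform interpolation control.

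Finally, for (iv) I would use Theorem~\ref{endpoints_accurate}: the Lebesgue measure of the symmetric difference between the estimated and true support tends to zero, and on the true support $f$ is uniformly continuous while $\hat f$ is set to $0$ outside the estimated support, so this contribution is $o(1)$. Collecting the four estimates, $\sup_{x\in\mathbb{R}}|\hat f(x,\varepsilon)-f(x)|\to 0$ as $\varepsilon\to 0$, which is the claim. The case $\gamma>1$ is identical after excising the neighborhood of $x=0$ where the density has a point mass; this is exactly the $\mathcal{I}$ mentioned in the footnote after \eqref{compute_density}.
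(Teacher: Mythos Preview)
Your four-piece decomposition matches the paper's architecture: (i) is Lemma~\ref{stieltjes_bound} (Poisson-kernel smoothing), (iii) is Lemma~\ref{inexact_ode} (Gr\"onwall propagation of the FPA starting error), and (iv) is handled in Lemma~\ref{grid_converge_lemma} via Theorem~\ref{endpoints_accurate}. So the skeleton is the same.

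The substantive difference is your piece (ii). You frame the interpolation error as $\|\hat f - f_\delta\|_\infty$, which forces you into the competition between the grid spacing $h\sim\varepsilon^{1/2}$ and the smoothing height $\delta=\varepsilon^2$; your proposed resolution (bulk/edge split, H\"older-$1/2$ edge behavior, logarithmic growth of $L(\delta)$) is more elaborate than needed and not fully justified. In particular, the claim that $L(\delta)$ grows at most like $\log(1/\varepsilon)$ is unsupported---$\mathcal F$ has a pole at the edge values of $v$, so the Lipschitz constant blows up polynomially in $1/\delta$ there---and the square-root edge behavior you invoke is a property of $f$, not of the interpolant you are actually bounding.

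The paper sidesteps this obstacle entirely in Lemma~\ref{grid_converge_lemma}: it compares $\hat f(x)$ to $f(x)$ through the grid values $\hat f(x_i)$ and $f(x_i)$, \emph{not} through $f_\delta(x)$. Because $\hat f$ is piecewise linear, its oscillation over one cell is $|\hat f(x_{i+1})-\hat f(x_i)|$; once grid convergence $\max_i|\hat f(x_i)-f(x_i)|\to 0$ is in hand, this is bounded by $2\max_i|\hat f(x_i)-f(x_i)|+\omega(h,f)$. Only the fixed modulus of continuity of the limit density $f$ enters, and no $\delta$-dependent Lipschitz constant appears in the interpolation step. This is the cleaner route; your bulk/edge machinery is unnecessary.

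A minor point: you fold the numerical ODE discretization into (iii). The paper separates it out as Lemma~\ref{ODE_approx} (Euler's method, $O(h)$ local error), distinct from the starting-value propagation in Lemma~\ref{inexact_ode}.
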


This theorem is proved in at the end of this section, using the tools developed in in the following lemmas. The approximation to the density is defined in several stages, which are outlined below. For the reader's convenience, we provide Table \ref{approx_stages} summarizing the notation and definitions for each stage.

\begin{table}[h] 
\footnotesize
\centering
\caption{Definitions used in the proof of Theorem \ref{density_approx}}

\begin{tabular}{|l|l|l|l|l} 
\cline{1-4}
Name                                     & Definition                   & Defined in               & Analyzed in Lemmas                            &  \\ \cline{1-4}
$x_j(\varepsilon)$                       & grid                         &  \eqref{grid_def}                            &                                                            &  \\
$f(x)$                                   & true density                 &        \eqref{stieltjes_to_density}                      & \ref{grid_converge_lemma} &  \\
$f(x_j(\varepsilon), \varepsilon)$       & solution to exact ODE        & \eqref{exact_ode_def}    & \ref{ode_unique} &  \\
$\tilde f(x_j(\varepsilon),\varepsilon)$ & solution to inexact start ODE      & \eqref{inexact_ode_def}  & \ref{inexact_ode}, \ref{ODE_approx}                  &  \\
$\hat f(x_j(\varepsilon),\varepsilon)$   & Euler's method approximation & \eqref{euler_method_def} & \ref{grid_converge_lemma}, \ref{ODE_approx}         & \\ \cline{1-4}
\end{tabular}
\label{approx_stages}
\end{table}

In the first stage, outside the support intervals $[\hat{l}_{k}(\varepsilon), \hat{u}_{k}(\varepsilon)]$ we define $\hat{f}(x, \varepsilon) =0$. We also set $\hat f (\hat{l}_{k}(\varepsilon),\varepsilon)=\hat f(\hat{u}_{k}(\varepsilon), \varepsilon) =0$. Since the approximated support converges, we see that the estimated density converges to zero uniformly outside of the support. All that is left is to handle the support intervals. 

Consider a support interval $[l,u]$, where $l<u$ are the true endpoints of a connected component of the support of $F$. Denote the estimated support by $[\hat l, \hat u]$, and let $\hat l = x_0 < x_1 < \ldots < x_M = \hat u$ be a uniformly spaced grid $x_i = x_i(\varepsilon)$ of length $M = M(\varepsilon)$ depending on $\varepsilon$, on which we will approximate the density $\hat{f}(x_j)$. In Algorithm \eqref{ode_alg}, we specified $M = \lceil\varepsilon^{-1/2}\rceil$ for concreteness. We will see in the proof that more general choices of grids work. For this reason, we will not specify the choice of the grid at the moment, and instead only require that the spacings tend to zero: $|x_{i+1}-x_{i}| \le h(\varepsilon)$, and $\lim_{\varepsilon \to 0} h(\varepsilon) = 0$.

Next, we reduce the approximation problem to the grid $x_i$. As explained in Algorithm \eqref{ode_alg}, for $x$ within the estimated support and not necessarily on the grid, define the linear interpolation $\hat{f}(x,\varepsilon) = \alpha \hat{f}(x_i,\varepsilon) + (1-\alpha) \hat{f}(x_{i+1},\varepsilon)$, where $x_i \le x < x_{i+1}$, and $x = \alpha x_i + (1-\alpha)x_{i+1}$. This ensures that the estimated density $\hat f(\cdot,\varepsilon)$ is continuous. With these definitions, we reduce uniform convergence over all $x$ to uniform convergence only on the grid.

\begin{lemma} To show the convergence in Theorem \ref{density_approx}, it is enough to show that the density approximations converge uniformly on the grid $x_i = x_i(\varepsilon)$, that is: 
\begin{equation}
\lim_{\varepsilon \to 0} \max_{0 \le i \le M(\varepsilon)}|\hat{f}(x_{i}(\varepsilon),\varepsilon)- f(x_{i}(\varepsilon))| = 0.
\label{grid_converge}
\end{equation}
\label{grid_converge_lemma}
\end{lemma}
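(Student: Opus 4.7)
The plan is to decompose the pointwise error $|\hat f(x,\varepsilon)-f(x)|$ on three regions of $\mathbb{R}$ via the triangle inequality, invoking (i) the uniform continuity of $f$ on its compact support, and (ii) the support convergence from Theorem \ref{endpoints_accurate}. By Lemma \ref{silv_choi} $f$ is continuous on $\mathbb{R}$ for $\gamma<1$; by Lemma \ref{easy_lemma} it is compactly supported and vanishes at every true endpoint $l_k,u_k$. Hence $f$ is uniformly continuous; write $\omega_f(\delta)$ for its modulus of continuity and set $\Delta(\varepsilon)=\max_k(|\hat l_k(\varepsilon)-l_k|+|\hat u_k(\varepsilon)-u_k|)$, both of which tend to zero with $\varepsilon$.

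The partition is: (A) the complement of $S_F\cup\widehat S(\varepsilon)$, where $S_F=\bigcup_k[l_k,u_k]$ and $\widehat S(\varepsilon)=\bigcup_k[\hat l_k(\varepsilon),\hat u_k(\varepsilon)]$; (B) the symmetric difference $S_F\triangle\widehat S(\varepsilon)$; and (C) the intersection $S_F\cap\widehat S(\varepsilon)$. On (A), both functions are identically zero. On the sub-case of (B) where $x\in S_F\setminus\widehat S(\varepsilon)$, we have $\hat f(x,\varepsilon)=0$ and $x$ is within $\Delta(\varepsilon)$ of a true endpoint where $f$ vanishes, so $|f(x)|\le\omega_f(\Delta(\varepsilon))$. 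On the sub-case $x\in\widehat S(\varepsilon)\setminus S_F$, we have $f(x)=0$ and the grid cell $[x_i,x_{i+1}]\ni x$ has both grid points within $\Delta(\varepsilon)+h(\varepsilon)$ of a true endpoint; combining \eqref{grid_converge} with uniform continuity yields, for $j\in\{i,i+1\}$, $|\hat f(x_j,\varepsilon)|\le\max_j|\hat f(x_j,\varepsilon)-f(x_j)|+\omega_f(\Delta(\varepsilon)+h(\varepsilon))$, and the linear interpolant $\hat f(x,\varepsilon)$ inherits the same bound by convexity.

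On Region (C), write $x=\alpha x_i+(1-\alpha)x_{i+1}$, introduce the linear interpolant $L_f(x)=\alpha f(x_i)+(1-\alpha)f(x_{i+1})$ of the true density, and apply the triangle inequality:
\[|\hat f(x,\varepsilon)-f(x)|\le|\hat f(x,\varepsilon)-L_f(x)|+|L_f(x)-f(x)|\le\max_j|\hat f(x_j,\varepsilon)-f(x_j)|+\omega_f(h(\varepsilon)).\]
Here the first piece uses convexity of the interpolation and vanishes by hypothesis \eqref{grid_converge}; the second uses that $L_f(x)$ is a convex combination of values of $f$ within $h(\varepsilon)$ of $x$ and vanishes since $h(\varepsilon)\to 0$. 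Taking the supremum over the three regions yields uniform convergence.

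The main obstacle is the sub-case of Region (B) in which $x$ is in the estimated but not the true support: there $f(x)=0$, so one cannot control $\hat f(x,\varepsilon)$ directly by continuity of $f$ at $x$. Instead one must exploit the zero boundary values $\hat f(\hat l_k,\varepsilon)=\hat f(\hat u_k,\varepsilon)=0$ together with \eqref{grid_converge} and the fact that the enclosing grid cell is close to a true endpoint. Once this piece is in place, the remaining estimates are routine modulus-of-continuity bounds.
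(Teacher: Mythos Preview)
Your proof is correct and follows the same overall three-region strategy as the paper, but there is one structural difference worth noting. The paper observes that, \emph{by construction}, the estimated support intervals lie inside the true ones: $l_k \le \hat l_k(\varepsilon) \le \hat u_k(\varepsilon) \le u_k$. This eliminates your ``hard'' sub-case of Region (B) entirely---the set $\widehat S(\varepsilon)\setminus S_F$ is empty---so the paper never needs the argument you flag as the main obstacle. Your treatment of that sub-case is nonetheless correct and makes the reduction independent of this monotonicity fact, which is a mild robustness gain. In Region (C) your use of the auxiliary interpolant $L_f$ is also a bit cleaner than the paper's version, which bounds $|\hat f(x,\varepsilon)-\hat f(x_i,\varepsilon)|$ via a modulus of continuity of $\hat f$ itself; since $\hat f$ varies with $\varepsilon$, that step implicitly requires the kind of grid-value control you make explicit.
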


\begin{proof}
It is easy to check that by construction, $l_i \le  \hat l_i \le \hat u_i \le u_i$ for all support intervals. Therefore, we have for any $x$
$$
\hat{f}(x,\varepsilon) - f(x) =
\left\{
	\begin{array}{ll}
		0 & \mbox{\, if \, }  x \notin [l_i,u_i], \mbox{ for any } i \\
		-f(x) &\mbox{\, if \, }  l_i \le x \le \hat l_i, \mbox{ or } \hat u_i \le x \le u_i \mbox{ for some } i \\
		\hat{f}(x,\varepsilon) - f(x) &\mbox{\, if \, }  \hat l_i \le x \le \hat u_i,\mbox{ for some } i \\
	\end{array}
\right.
$$

The convergence claim made by Lemma \ref{grid_converge_lemma} is clear in the first case. In the second case, note that there are only finitely many support intervals. Therefore it is enough to show $\lim_{\varepsilon \to 0} \sup_{x:l_i \le x \le \hat l_i}|f(x)|=0$ for all $i$, and the analogous statement for upper endpoints. We showed earlier in Proposition \ref{endpoints_accurate} that $\hat l_i \to l_i$. By continuity of $f$, this shows the desired claim $\sup_{x:l_i \le x \le \hat l_i}|f(x)| \to 0$ for the second case. 

The third case is the most interesting one. Consider any $x$ such that $\hat l_i \le x \le \hat u_i$. There are two neighbors in the grid such that $x_i(\varepsilon) \le x < x_{i+1}(\varepsilon)$. By the triangle inequality, we can bound

$$|\hat{f}(x,\varepsilon) - f(x)| \le |\hat{f}(x,\varepsilon) - \hat{f}(x_i,\varepsilon)| + |\hat{f}(x_i,\varepsilon) - f(x_i)| + |f(x_i) - f(x)| .$$

Recall that the maximum spacing was bounded: $|x_{i+1}-x_i| \le h(\varepsilon)$. Let us denote a modulus of continuity for a function $g$ by $\omega$. This function $\omega$ enjoys $|g(x) - g(y)| \le \omega(|x-y|,g)$ for any $x,y$. Taking the maximum over all $x \in S_i$ (where $S_i = [\hat l_i, \hat u_i]$) in the previous display, we obtain:

$$\sup_{x\in S_i}|\hat{f}(x,\varepsilon) - f(x)| \le \omega(h(\varepsilon), \hat f) + \max_{0 \le i \le M(\varepsilon)}|\hat{f}(x_i,\varepsilon) - f(x_i)| + \omega(h(\varepsilon),  f). $$

Since $f,\hat f$ are continuous and compactly suppported, they are uniformly continuous. Therefore, as $h(\varepsilon) \to 0$, we get  $\omega(h(\varepsilon),  f) \to 0$, and similarly for $\hat f$. Assuming \eqref{grid_converge}, this yields the desired claim:
$$\lim_{\varepsilon \to 0} \sup_{x\in S_i}|\hat{f}(x,\varepsilon)- f(x)| = 0.$$
\end{proof}

We will now focus on showing the convergence on the grid. The numerical approximation is found using an ordinary differential equation, whose starting point is obtained via the fixed-point algorithm (FPA). In \cite{couillet2011deterministic} FPA is presented for a more general class of problems; for the reader's convenience, we describe the special case needed in Algorithm \ref{fp.alg}. 

\begin{algorithm}
\caption{\textsc{FPA}: Fixed-Point Algorithm}\label{fp.alg}
\begin{algorithmic}[1]
\Procedure{\textsc{FPA}}{}
\BState \textbf{input}
\State $\textit{H} \gets \text{population eigenvalue distribution}$
\State $\textit{$\gamma$} \gets \text{ aspect ratio}$
\State $\eta \gets$ accuracy parameter ($>0$)
\State $z \gets \text{complex argument $\in \mathbb{C}^+$. If the argument $x$ is real, then $z \gets x + i \eta^2$}$
\BState \textbf{initialize}:
\State $v_0 \gets -1/z$
\State $n \gets 0$
\State $h(v) :=  z - \gamma \int \frac{t}{1 + tv}\, dH(t)$

\BState \textbf{while} {$ |1/v_n + h(v_n)| > \eta$}
\State ${v_{n+1}} \gets -1/h(v_n)$
\State $n \gets n+1$.
\BState \textbf{end};
\State $m_n \gets \gamma^{-1}v_n + (\gamma^{-1}-1)/z$ 
\BState \textbf{return} $\hat v(z,\eta) = v_n$; $\hat f(x,\eta) = \mathrm{Imag}(m_n) /\pi$, where $x =  \mathrm{Re}(z)$
\EndProcedure
\end{algorithmic}
\end{algorithm}

The fixed-point algorithm is a method for solving the Silverstein equation. It is based on the observation that \eqref{MP.eq.disc} is a fixed-point equation $v = -1/h(v;z)$, for any given $z$. Then one defines a starting point $v_0=-1/z$, and iterates $v_{n+1} = -1/h(v_{n};z)$ until the convergence criterion $ |1/v_n + h(v_n)| \le \eta$ is met. Let $\hat v(z,\eta)$ be the solution produced by the fixed-point algorithm.  It was shown in \cite{couillet2011deterministic} that, for any fixed $z \in \mathbb{C}^+$,  $\hat v(z,\eta)$ converges to the unique solution of the Silverstein equation \eqref{MP.eq.disc} with positive imaginary part, as $\eta \to 0$: $\hat v(z,\eta) \to v(z)$. 

The accuracy parameter $\eta$ in the fixed-point algorithm is important. If the algorithm is used to approximate the density of the ESD at $x$, with accuracy $\eta$, as in the section comparing the different methods, we run \textsc{FPA} at $z =x + i \eta^2$. The scaling $\eta^2$ is motivated by Lemma \ref{stieltjes_bound}. The proof of this lemma shows that the true solution at imaginary part $\varepsilon$ guarantees an accuracy $\varepsilon^{1/2}$. Therefore, to get accuracy $\eta$, we go down to imaginary part $\eta^2$. Further, we use the same threshold $\eta$ in the stopping criterion. In principle, these two parameters could be decoupled, but this simple choice suffices for our purposes. 

Therefore, to find the starting point, we use FPA for $z  =\hat l + i\delta(\varepsilon)$, where $\hat l$ is the approximation to the lower grid endpoint, and $\delta(\varepsilon)=\varepsilon^2$. The accuracy parameter $\eta$ is set to $\eta = \varepsilon$. This gives a starting point $\hat v_0 = \hat v(\hat l + i\delta(\varepsilon))$. For simplicity, we will first analyze, in Lemma \ref{ode_unique}, the case of an exact starting point $v_0 = v(\hat l + i\delta(\varepsilon))$. That is, we argue about the case when the solution of the Silverstein equation has been found \emph{exactly} by FPA. In Lemma \ref{inexact_ode} we extend the argument to the inexact starting point $\hat v_0$.

We call the \emph{exact ODE} Eq. \eqref{ode} on the interval $[\hat l,\hat u]$, started at the true solution $v_0 = v(\hat l + i\delta(\varepsilon))$.  The exact ODE has the right solution: 

\begin{lemma}{\bf Correctness of the exact ODE:} Consider the ODE \eqref{ode} for the complex-valued function $r$ of a real variable $x$
\begin{equation*}
\frac{dr}{dx} = \frac{1}{\frac{1}{r^2} -\gamma\sum_{i=1}^{J} \frac{w_it_i^2}{(1 + t_ir)^2}}, \mbox{ } r(x_0) = v_0.
\end{equation*}
Let the starting point be the exact solution $v_0 = v(x_0 + i\delta(\varepsilon))$. Then this equation has a unique solution on $[x_0,\hat u]$, and this solution is $r(x)  = v(x + i\delta(\varepsilon))$.
\label{ode_unique}
\end{lemma}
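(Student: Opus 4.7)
The plan is to establish existence of the claimed solution by direct verification, then obtain uniqueness via Picard--Lindelöf. First I would check that $r(x) := v(x + i\delta(\varepsilon))$ actually satisfies the ODE. Since $\delta(\varepsilon) > 0$, the horizontal line $\{x + i\delta(\varepsilon) : x \in [x_0, \hat u]\}$ lies entirely inside $\mathbb{C}^+$, where the companion Stieltjes transform $v$ is analytic by the Silverstein--Choi theory. Consequently $r$ is smooth in the real variable $x$ with $r'(x) = v'(x + i\delta(\varepsilon))$. Differentiating the fixed-point equation \eqref{MP.eq.disc} in $z$ and solving algebraically for $v'(z)$ reproduces exactly the right-hand side $\mathcal{F}(v)$ of \eqref{ode}, so $r'(x) = \mathcal{F}(r(x))$ throughout $[x_0, \hat u]$. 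The initial condition $r(x_0) = v_0 = v(x_0 + i\delta(\varepsilon))$ holds by definition.

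For uniqueness I would invoke Picard--Lindelöf on the real-variable system obtained by splitting $\mathcal{F}$ into its real and imaginary parts. Note that $\mathcal{F}(v) = 1/z'(v)$ is complex-analytic, hence locally Lipschitz, at every $v$ for which the denominator is finite and nonzero, i.e. $v \notin \{0,-1/t_i\}$ and $z'(v) \neq 0$. I would then show that the trajectory $\{r(x) : x \in [x_0,\hat u]\}$ is contained in a compact subset of $\mathbb{C}^+$ on which these conditions hold uniformly. First, $v$ maps $\mathbb{C}^+$ into $\mathbb{C}^+$, so $\mathrm{Imag}(r(x))>0$ for every $x$, and by continuity of $\mathrm{Imag}(r(\cdot))$ on the compact interval $[x_0,\hat u]$, this imaginary part is bounded below by some $\eta>0$, which also keeps $r(x)$ uniformly away from the real singularities $0$ and $-1/t_i$. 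Second, differentiating the identity $z(v(z))=z$ gives $z'(v(z))\cdot v'(z) = 1$; since $z(v)$ is rational with poles located only on the real line, $z'(v(z_0))=\infty$ would force $v(z_0)$ to be one of $\{0,-1/t_i\}\subset\mathbb{R}$, contradicting $v(z_0)\in\mathbb{C}^+$. Hence $v'(z) \neq 0$ and $z'(v(z))\neq 0$ throughout $\mathbb{C}^+$. By compactness there is a tubular neighborhood of the trajectory on which $\mathcal{F}$ is bounded and Lipschitz, so the standard uniqueness theorem for ODEs, applied together with the continuation of local uniqueness to the whole interval $[x_0,\hat u]$ on which a solution has been exhibited, yields the claim.

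The main obstacle is the verification that $z'(v)$ does not vanish along the trajectory; this requires exploiting the interplay between the analyticity of $v$ on $\mathbb{C}^+$, the mapping property $v(\mathbb{C}^+)\subset\mathbb{C}^+$, and the real location of the singularities of the rational map $z(v)$. Once this nondegeneracy is established, the Lipschitz bound is routine and the conclusion follows from Picard--Lindelöf.
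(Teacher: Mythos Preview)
Your proposal is correct and follows essentially the same route as the paper. Both arguments verify that $r(x)=v(x+i\delta(\varepsilon))$ solves the ODE by differentiating the Silverstein equation, and obtain uniqueness by showing that the right-hand side $\mathcal{F}$ is smooth on a region containing the trajectory, then invoking a standard ODE uniqueness theorem. The only cosmetic difference is that the paper passes through the inverse function theorem (writing $g(y)=v'(v^{-1}(y))$ locally on $v(\mathbb{C}^+)$) to get continuity of $g$ and $g'$, whereas you work directly with the explicit rational structure $\mathcal{F}(v)=1/z'(v)$, using $v(\mathbb{C}^+)\subset\mathbb{C}^+$ to stay away from the real poles $\{0,-1/t_i\}$ and the identity $z'(v(z))\,v'(z)=1$ to rule out zeros of $z'$ along the trajectory; this is arguably the more transparent of the two.
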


\begin{proof}
The ODE was obtained by differentiating the Silverstein equation \eqref{MP.eq.disc} for $v$. Since $r(x)  = v(x + i\delta(\varepsilon))$ obeys the Silverstein equation and also satisfies the starting condition, it is clearly a solution. 

To show that the solution is unique, we appeal to basic results in ordinary differential equations. Specifically, consider the general ODE $y' = g(y)$, $y(x_0) = y_0$. It is well known (e.g. Theorem 7.4 on pp. 39 of the monograph by Hairer and Norsett \cite{hairer2009solving}) that the solution is unique on the open set $(x,y) \in U \subset \mathbb{R} \times \mathbb{C}$ where $g, g'$ are continuous. If started at any point $(x_0,y_0) \in U$, the solution can be continued to the boundary of {\it U}. 

In our case $g(r)$ is continuous on the entire image $v(\mathbb{C}^+) = \{y: y = v(z), \mbox{ for some } z \in \mathbb{C}^+\}$. Indeed, let $y_0$ be an arbitrary element of $v(\mathbb{C}^+) $, so that $y_0 = v(z_0)$ for some $z_0 \in \mathbb{C}^+$. Then by the definition of the ODE, $v'(z_0) = g(y_0)$. Now $v$ is analytic on $\mathbb{C}^+$, so clearly $v'(z)$ is well-defined and continuous at $z_0$. By the expression for $v'(z)$,  $v'(z) = 1/k(v)$ for some complex function $k$, we see that $v'(z) \neq 0$. Hence, by the inverse function theorem, $v$ is invertible near $y_0$, so that locally $z = v^{-1}(y)$ in a neighborhood of $y_0$. Therefore, locally near $y_0$, $g(y) = v'( v^{-1}(y)) $. This shows that $g$ is continuous near $y_0$. Hence, $g(y)$ is continuous on the entire image $v(\mathbb{C}^+)$. By a similar argument $g'(y)$ is continuous on the entire image $v(\mathbb{C}^+)$.

This shows that for our problem $U$ contains $\mathbb{R} \times v(\mathbb{C}^+) $. Clearly we start at a point $(x_0,v_0)$ in $U$. By the result cited above, the solution to the ODE is unique on the entire set $x \in \mathbb{R}$, and in particular on  $[x_0,\hat u]$, finishing the proof. 
\end{proof}

Next, we will argue that even with an inexact starting point for the ODE, the solutions are still nearly exact. Suppose that FPA produces an estimate $\tilde v_0 = \hat v(z,\eta)$ of $v_0$. We call the ODE \eqref{ode} started at $\tilde v_0$ the  \emph{inexact start ODE}.  The difference $c(\varepsilon) = \tilde v_0- v_0$ can be made arbitrarily small by taking $\eta$ sufficiently close to zero in FPA. The following lemma ensures that the solution to the inexact start ODE stays close to the true solution.

\begin{lemma}{\bf Correctness of the inexact start ODE:} Consider the ODE \eqref{ode} as given in Lemma \ref{ode_unique}, but started at $\tilde v_0= v_0 + c(\varepsilon)$, where $\tilde v_0$ is the solution produced by FPA for starting point $z  =\hat l + i\delta(\varepsilon)$ and sufficiently small $\eta$. Then, for sufficiently small $\varepsilon$, this inexact start ODE has a unique solution $\tilde r$ on $[\hat l, \hat u]$, which obeys $|\tilde r(x) - r(x)| = O(c(\varepsilon))$ uniformly over all $x \in [\hat l, \hat u]$.
\label{inexact_ode}
\end{lemma}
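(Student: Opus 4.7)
The plan is to invoke the classical theorem on continuous dependence of ODE solutions on initial conditions, combined with Gr\"onwall's inequality. Writing the ODE as $dr/dx = \mathcal{F}(r)$ with $\mathcal{F}$ as in \eqref{ode}, I will use the fact established inside the proof of Lemma \ref{ode_unique} that both $\mathcal{F}$ and $\mathcal{F}'$ are continuous on the open set $v(\mathbb{C}^+) \subset \mathbb{C}$, and that the exact solution $r(x) = v(x + i\delta(\varepsilon))$ takes values in $v(\mathbb{C}^+)$ throughout $[\hat l, \hat u]$.

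First I would fix $\varepsilon > 0$ and construct a compact tube $K_\varepsilon \subset v(\mathbb{C}^+)$ around the exact trajectory $\{r(x): x \in [\hat l, \hat u]\}$, chosen so that an open neighborhood of $K_\varepsilon$ still lies inside $v(\mathbb{C}^+)$. Since $\mathcal{F}$ is $C^1$ on this neighborhood and $K_\varepsilon$ is compact, $\mathcal{F}$ is Lipschitz on $K_\varepsilon$ with some finite constant $L = L(\varepsilon)$. By the FPA convergence result in \cite{couillet2011deterministic}, $c(\varepsilon) = \tilde v_0 - v_0 \to 0$ as $\eta \to 0$, so for $\eta$ small enough (this is precisely the content of ``sufficiently small $\varepsilon$'' in the statement, since $\eta = \varepsilon$) the perturbed starting point $\tilde v_0$ lies well inside $K_\varepsilon$.

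Local existence and uniqueness of $\tilde r$ then follow from Picard--Lindel\"of applied on a compact neighborhood of $\tilde v_0$ in $v(\mathbb{C}^+)$. To extend $\tilde r$ all the way to $\hat u$ I would run a standard bootstrap argument: let $T \in [\hat l, \hat u]$ be the supremum of points such that $\tilde r$ is defined and takes values in $K_\varepsilon$ on $[\hat l, T)$. On this interval the error $e(x) = \tilde r(x) - r(x)$ satisfies $e(\hat l) = c(\varepsilon)$ and $|e'(x)| = |\mathcal{F}(\tilde r(x)) - \mathcal{F}(r(x))| \le L\,|e(x)|$, so Gr\"onwall's inequality gives $|e(x)| \le |c(\varepsilon)|\,e^{L(\hat u - \hat l)}$. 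For $c(\varepsilon)$ sufficiently small this bound is strictly less than the thickness of $K_\varepsilon$ around $r$, so $\tilde r$ cannot reach the boundary of $K_\varepsilon$; by a standard continuation argument this forces $T = \hat u$, and simultaneously delivers the uniform bound $|\tilde r(x) - r(x)| = O(c(\varepsilon))$ on $[\hat l, \hat u]$.

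The main obstacle I expect is verifying that the perturbed trajectory never escapes $v(\mathbb{C}^+)$ through a singularity of $\mathcal{F}$ (at $r = 0$ or $r = -1/t_i$); the remaining ingredients (Lipschitz continuity of $\mathcal{F}$ on a compact set, Gr\"onwall, Picard--Lindel\"of) are all classical. The potential worry is that for small $\delta(\varepsilon) = \varepsilon^2$ the curve $r(x)$ approaches the real axis near the edges of the support, and $L(\varepsilon)$ may blow up; but for each fixed $\varepsilon > 0$ the curve lies strictly in the open upper half of $v(\mathbb{C}^+)$, so a tube $K_\varepsilon$ with positive thickness exists and Gr\"onwall yields the stated $O(c(\varepsilon))$ bound at that $\varepsilon$. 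The quantitative interaction between $L(\varepsilon)$ and $c(\varepsilon)$ as $\varepsilon \to 0$ is not needed here; it is absorbed into the choice of $\eta$ in FPA and will be revisited in the subsequent lemma that combines this estimate with the Euler discretization error.
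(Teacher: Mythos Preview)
Your proposal is correct and lands on the same Gr\"onwall-type estimate $|\tilde r(x)-r(x)|\le |c(\varepsilon)|\exp\bigl((\hat u-\hat l)L\bigr)$ as the paper (which cites it as the ``fundamental lemma'', Theorem~10.2 of \cite{hairer2009solving}). The difference lies in how global existence of $\tilde r$ on all of $[\hat l,\hat u]$ is obtained, i.e.\ exactly the ``main obstacle'' you flag.

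You handle it by a tube-plus-bootstrap argument: build a compact $K_\varepsilon$ around the exact curve, bound the Lipschitz constant $L$ there, and use Gr\"onwall to show the perturbed solution cannot reach $\partial K_\varepsilon$ before $\hat u$. This is standard and works. The paper takes a shorter, more structural route: since $v(\mathbb{C}^+)$ is open (open mapping theorem), for small enough $c(\varepsilon)$ the perturbed start $\tilde v_0$ lies in $v(\mathbb{C}^+)$, so $\tilde v_0=v(a+ib)$ for some $a\in\mathbb{R}$, $b>0$. Then one reads off the global solution explicitly as $\tilde r(x)=v\bigl((x-x_0)+a+ib\bigr)$, which by construction exists for every $x$ and stays inside $v(\mathbb{C}^+)$. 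This sidesteps the escape question entirely: the perturbed solution is just the exact solution shifted in the $z$-plane. After that, the paper chooses a compact connected $K_0\subset v(\mathbb{C}^+)$ containing both curves and applies the fundamental lemma just as you do.

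The trade-off: your argument is self-contained and would transfer to ODEs without this special ``translate of the same flow'' structure; the paper's argument exploits that $\mathcal{F}=1/z'(v)$ is autonomous and arises from differentiating a globally defined map $z\mapsto v(z)$, which makes existence trivial and eliminates the need to track tube thickness versus Lipschitz constant.
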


\begin{proof}
First we will show the uniqueness of the solution. By Proposition 1 of \cite{couillet2011deterministic}, the fixed-point Algorithm  \ref{fp.alg} started at $z  = \hat l + i\delta(\varepsilon)$, and with accuracy $\eta$, produces a solution $\tilde v_0 = \hat v(z,\eta)$ such that  $\hat v(z,\eta) \to v(z)$ as $\eta \to 0$. Therefore, for sufficiently small $\eta$, we can ensure that $\tilde v_0 - v_0 = O(c(\varepsilon))$ for an arbitary small $c(\varepsilon)$.

By the open mapping theorem, $v(\mathbb{C}^+)$ is an open set containing the true solution curve $r(x) = v(x + i\delta(\varepsilon))$. Hence, for sufficiently small $c$, $\tilde v_0 \in v(\mathbb{C}^+)$. Hence, $\tilde v_0 = v(a + ib)$ for some $a$ and $b>0$. Then, by the same argument as in Lemma \ref{ode_unique}, the solution $\tilde r$ exists and is unique, and is given by $\tilde r(x) = v(  (x - x_0) + a + ib)$. This solution exists for all $x$ and belongs to $v(\mathbb C^+)$. 

Next, we will use a general inequality for inexact start ODEs for the quantitative bound. The `fundamental lemma' (Theorem 10.2 of \cite{hairer2009solving}, pp 58) states: Consider the ODE $y' = g(y)$, and let $y,\tilde y$ be two solutions with starting points $y(x_0), \tilde y(x_0)$. Suppose $|g'(y)| \le L$ on a connected set $K_0$ containing the two solution curves $y,\tilde y$ for $x \in [x_0,x_M]$. Then the solutions $y,\tilde y$ are close to each other for all $x \in [x_0,x_M]$, specifically: $|y(x) - \tilde y(x)| \le |y(x_0) - \tilde y(x_0)| \exp((x-x_0)L)$.

We have shown in the proof of Lemma \ref{ode_unique} that $f'(y)$ is continuous in the image $v(\mathbb{C}^+)$.  Let $K_0$ be a compact connected subset of $v(\mathbb C^+)$ containing the solution curves $r(x),\tilde r(x)$ for $x \in [\hat l, \hat u]$ (which exist by the above argument). Then $f'$ is bounded by some constant $L$ on $K_0$. By the fundamental lemma, we have  $|\tilde r(x) - r(x)| \le c(\varepsilon) \exp((\hat u - \hat l) L)  = O(c(\varepsilon))$, as required. This finishes the lemma. 
\end{proof}

Next we introduce notations for the solutions of the two ODEs. As we discussed earlier, the grid $x_i = x_i(\varepsilon)$ is uniformly spaced on $[\hat l, \hat u]$:
\begin{equation}
\hat l = x_0 < \ldots < x_M = \hat u,
\label{grid_def}
\end{equation}

The solutions to the exact ODE \eqref{ode} in Lemma \ref{ode_unique} on the grid are $v(x_j + i\delta (\varepsilon))$.  We then define $m(x_j + i\delta (\varepsilon))$ according to \eqref{dual.ST}, and 

\begin{equation}
f(x_j(\varepsilon), \varepsilon) = \mathrm{Imag}(m(x_j + i\delta (\varepsilon)))/\pi.
\label{exact_ode_def}
\end{equation}

Similarly, with the solutions $\tilde v(x_j + i\delta (\varepsilon))$ to the inexact start ODE analyzed in Lemma \ref{inexact_ode} on the same grid define $\tilde m(x_j + i\delta (\varepsilon))$ accordingly, using \eqref{dual.ST}, and 
 
\begin{equation}
 \tilde f(x_j(\varepsilon),\varepsilon) =  \mathrm{Imag}(\tilde m(x_j + i\delta (\varepsilon)))/\pi.
\label{inexact_ode_def}
\end{equation}

Lemma \ref{inexact_ode} shows that 

\begin{equation}
|\tilde f(x_j(\varepsilon),\varepsilon) - f(x_j(\varepsilon),\varepsilon)|  = O(c(\varepsilon)).
\label{inexact_ode_bound}
\end{equation}

In particular, this bound highlights that the approximation $\tilde f$ is uniformly accurate over the grid $x_j$, as required by Lemma \ref{grid_converge_lemma}.

We show next that Euler's method for discretizing the inexact start ODE on the grid $x_i$ produces numerical approximations that converge as $\varepsilon \to 0$. In practice we use a higher order ODE solver, but for simplicity here we consider Euler's method.

Let $\hat v_j$ be the sequence produced by Euler's method for the inexact start ODE \eqref{ode} on the discretization $\hat l = x_0 < \ldots < x_M = \hat u$.  Define $\hat m_j = \gamma^{-1}\hat v_j + (\gamma^{-1}-1)/z_j$, where $z_j = x_j + i\delta (\varepsilon)$. Also define the density approximations 
\begin{equation}
\hat f(x_j,\varepsilon) = \mathrm{Imag}(\hat m_j)/\pi.
\label{euler_method_def}
\end{equation}

Then we have the following:

\begin{lemma} {\bf Euler's method approximates the true solution to the inexact start ODE:} Consider a fixed $\varepsilon>0$, and suppose that $\max_i |x_{i+1}-x_{i}| \le h$. Then $|\hat f(x_j(\varepsilon),\varepsilon) -  \tilde f(x_j(\varepsilon),\varepsilon)| = O(h)$ for sufficiently small $h$.
\label{ODE_approx}
\end{lemma}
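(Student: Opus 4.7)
The plan is to invoke the standard global error bound for Euler's method applied to the inexact-start ODE for $\tilde v$, and then convert the resulting bound on $|\hat v_j - \tilde v(x_j + i\delta(\varepsilon))|$ into a bound on $|\hat f - \tilde f|$ by means of the affine relation \eqref{dual.ST}. Throughout, $\varepsilon > 0$ is held fixed, so all implicit constants may depend on $\varepsilon$ (and hence on $\delta(\varepsilon)$), but crucially not on the step size $h$.

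First, I would establish quantitative regularity of the right-hand side $\mathcal{F}(v)$ from \eqref{ode} along the solution curve. Lemma \ref{inexact_ode} shows that $\tilde v(x + i\delta(\varepsilon))$ exists and lies in $v(\mathbb{C}^+)$ for all $x \in [\hat l, \hat u]$, so its image is a compact curve in $v(\mathbb{C}^+)$. In the proof of Lemma \ref{ode_unique} it was shown that both $\mathcal{F}$ and $\mathcal{F}'$ are continuous on all of $v(\mathbb{C}^+)$; since they are in fact analytic there, I can choose a slightly enlarged compact neighborhood $\tilde K \subset v(\mathbb{C}^+)$ of the solution curve on which $\mathcal{F}$ is Lipschitz with some finite constant $L = L(\varepsilon)$ and $\mathcal{F}' \cdot \mathcal{F}$ is bounded (this provides a bound on $\tilde v''$ along the curve, needed for the local truncation estimate).

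Second, I would apply the classical convergence theorem for Euler's method (e.g.\ Theorem~7.3 of Hairer--N{\o}rsett \cite{hairer2009solving}): under the above Lipschitz and boundedness hypotheses, the Euler iterates $\hat v_j$ for the ODE $r' = \mathcal{F}(r)$ with uniform step $\max_i |x_{i+1}-x_i| \le h$ satisfy
\begin{equation*}
\max_{0 \le j \le M} \bigl|\hat v_j - \tilde v(x_j + i\delta(\varepsilon))\bigr| \le C(\varepsilon)\, h
\end{equation*}
for all $h$ sufficiently small, where $C(\varepsilon)$ depends on $L(\varepsilon)$, on the bound on $\tilde v''$ along the curve, and on the length $\hat u - \hat l$ of the interval. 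For $h$ small enough the Euler trajectory stays inside $\tilde K$, so the hypotheses remain valid throughout the iteration and the bound is self-consistent.

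Third, I would pass from the bound on $\hat v_j$ to a bound on $\hat f(x_j,\varepsilon)$ using \eqref{dual.ST}, which gives $\hat m_j - \tilde m(x_j+i\delta(\varepsilon)) = \gamma^{-1}(\hat v_j - \tilde v(x_j + i\delta(\varepsilon)))$. Taking imaginary parts and dividing by $\pi$ yields
\begin{equation*}
\bigl|\hat f(x_j,\varepsilon) - \tilde f(x_j,\varepsilon)\bigr| \le \frac{1}{\pi\gamma}\,\bigl|\hat v_j - \tilde v(x_j + i\delta(\varepsilon))\bigr| \le \frac{C(\varepsilon)}{\pi\gamma}\, h,
\end{equation*}
which is the required $O(h)$ estimate. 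The main technical obstacle is verifying that the Lipschitz constant $L(\varepsilon)$ is indeed finite on a neighborhood of the solution curve: since $\tilde v$ lives in $v(\mathbb{C}^+)$, the denominator $1/r^2 - \gamma \sum_i w_i t_i^2/(1+t_i r)^2$ of $\mathcal{F}(r)$ never vanishes along the curve (its reciprocal is $v'$, which is finite and nonzero on $\mathbb{C}^+$), and this non-degeneracy is what allows the enlargement to the compact $\tilde K$ on which $\mathcal{F}'$ is bounded. Once this is in hand, the convergence statement follows from standard Euler-method theory without further surprises.
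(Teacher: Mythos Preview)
Your proposal is correct and follows essentially the same approach as the paper: both invoke the standard Euler error estimate from \cite{hairer2009solving}, relying on the continuity (hence local boundedness) of $\mathcal{F}$ and $\mathcal{F}'$ in a neighborhood of the solution curve established in Lemmas~\ref{ode_unique} and~\ref{inexact_ode}. Your write-up is in fact more thorough than the paper's---you spell out the affine passage from $\hat v_j$ to $\hat f(x_j,\varepsilon)$ via \eqref{dual.ST}, and you articulate why the denominator of $\mathcal{F}$ is bounded away from zero on the compact neighborhood $\tilde K$---whereas the paper simply cites Theorem~7.5 of \cite{hairer2009solving} and declares the result.
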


\begin{proof}
This lemma is a direct consequence of the well-known error estimate for Euler's method. Consider the general ODE $y' = g(y)$, $y(x_0) = y_0$. Theorem 7.5 on pp. 40 of  \cite{hairer2009solving} states that the bounds $|g| \le A$, $|g'| \le L$ in a neighborhood of the solution imply that the Euler polygons $y_h(x)$, on a grid $x_i$ with maximum spacing at most $h$, obey $|y_h(x) - y(x)| \le A(\exp(L(x-x_0))-1)h$. In lemmas \ref{ode_unique}, \ref{inexact_ode}, we have shown for the inexact start ODE  \eqref{ode} started at $\tilde v_0$ that $g,g'$ are continuous in a neighborhood of the solution, so the bounds $A,L$ exist. We only consider the finite interval $[\hat l,\hat u]$, therefore the exponential term is bounded. We get $|y_h(x) - y(x)| =O(h)$, as required.
\end{proof}

If we take $h = h(\varepsilon) \to 0$ as $\varepsilon \to 0$, then the above lemma shows that $|\hat f(x_j(\varepsilon),\varepsilon) -  \tilde f(x_j(\varepsilon),\varepsilon)| \to 0$ uniformly over all $x_j(\varepsilon)$. Comparing with Lemma \ref{grid_converge_lemma} and Lemma \ref{inexact_ode} (specifically bound \eqref{inexact_ode_bound}), all that remains to show for our main result is that the true density $f$ is well approximated by the Stieltjes-smoothed density $f(\cdot,\varepsilon)$, i.e.: $f(x_j(\varepsilon),\varepsilon)-f(x_j(\varepsilon)) \to 0$. Since the $x_j(\varepsilon)$ depend on $\varepsilon$, it is necessary to show that $f(x,\varepsilon)-f(x) \to 0$ uniformly in $x$, where recall that $f(x,\varepsilon) = \mathrm{Imag}(m(x+ i\delta(\varepsilon)))/\pi$.

\begin{lemma}{\bf Approximation of a density by its Stieltjes transform:}
Let $f$ be a bounded probability density function. Denote by $m(z)$ its Stieltjes transform: $m(z) = \int f(x)/(x - z)\, dx.$  Suppose $f$ is uniformly continuous. Then as $\varepsilon \to 0$:
$$
\sup_{x \in \mathbb{R}} \left|\frac{1}{\pi}\mathrm{Imag}(m(x+ i\varepsilon)) -f(x)\right| \to 0,
$$
\label{stieltjes_bound}
\end{lemma}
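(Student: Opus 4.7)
\bigskip
\noindent\textbf{Proof proposal.} The plan is to recognize the right-hand side as the convolution of $f$ with the Poisson kernel and then exploit the standard approximate-identity argument, keeping careful track of uniformity in $x$.

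First I would compute the imaginary part explicitly: for $z = x+i\varepsilon$ with $\varepsilon>0$,
$$\frac{1}{\pi}\mathrm{Imag}(m(x+i\varepsilon)) \;=\; \int P_\varepsilon(y-x)\, f(y)\,dy,\qquad P_\varepsilon(t) := \frac{1}{\pi}\,\frac{\varepsilon}{t^2+\varepsilon^2}.$$
The family $\{P_\varepsilon\}_{\varepsilon>0}$ is the Poisson kernel, and an elementary computation shows $\int_{\mathbb{R}} P_\varepsilon(t)\,dt = 1$ for every $\varepsilon>0$. Multiplying this identity by $f(x)$ and subtracting gives
$$\frac{1}{\pi}\mathrm{Imag}(m(x+i\varepsilon)) - f(x) \;=\; \int P_\varepsilon(y-x)\bigl[f(y)-f(x)\bigr]\,dy.$$

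Next I would split the integration region at a scale $\delta>0$ to be chosen. Let $M := \sup_t f(t) < \infty$ and let $\omega_f(\delta) := \sup_{|s|\le \delta}|f(x+s)-f(x)|$, where the sup is taken over both $x$ and $|s|\le\delta$; uniform continuity of $f$ guarantees $\omega_f(\delta)\to 0$ as $\delta\to 0$. For the near part $|y-x|\le\delta$, use uniform continuity to bound $|f(y)-f(x)|\le\omega_f(\delta)$, so that
$$\left|\int_{|y-x|\le\delta}P_\varepsilon(y-x)\bigl[f(y)-f(x)\bigr]\,dy\right| \;\le\; \omega_f(\delta)\int P_\varepsilon(t)\,dt \;=\; \omega_f(\delta).$$
For the far part, use the crude bound $|f(y)-f(x)|\le 2M$, together with the explicit tail estimate
$$\int_{|t|>\delta}P_\varepsilon(t)\,dt \;=\; 1-\frac{2}{\pi}\arctan(\delta/\varepsilon),$$
which tends to $0$ as $\varepsilon\to 0$ for any fixed $\delta>0$, and crucially the bound is independent of $x$.

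Combining the two estimates yields, for all $x\in\mathbb{R}$,
$$\left|\frac{1}{\pi}\mathrm{Imag}(m(x+i\varepsilon)) - f(x)\right| \;\le\; \omega_f(\delta) \;+\; 2M\left(1-\tfrac{2}{\pi}\arctan(\delta/\varepsilon)\right).$$
Given any $\eta>0$, first choose $\delta$ small enough so that $\omega_f(\delta)<\eta/2$ (possible by uniform continuity), and then choose $\varepsilon$ small enough so that the second term is below $\eta/2$. Taking supremum over $x$ finishes the proof.

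The only mildly delicate point is making sure $\delta$ can be chosen independently of $x$, which is precisely what uniform continuity buys us; without it one would only obtain pointwise (not uniform) convergence. Everything else is the standard Poisson-kernel approximate-identity calculation.
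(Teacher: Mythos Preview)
Your proof is correct and follows essentially the same route as the paper: recognize $\tfrac{1}{\pi}\mathrm{Imag}\,m(x+i\varepsilon)$ as the Poisson-kernel convolution $P_\varepsilon * f$, subtract $f(x)\int P_\varepsilon = f(x)$, and split the resulting integral into a near part controlled by the modulus of continuity and a far part controlled by the Poisson tail $1-\tfrac{2}{\pi}\arctan(\delta/\varepsilon)$. The only cosmetic difference is that the paper couples the splitting scale to $\varepsilon$ via $\eta=\varepsilon^\alpha$ (and further bounds the tail by $\pi\varepsilon/\eta$), whereas you use the equivalent two-step $\delta$-then-$\varepsilon$ argument.
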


\begin{proof}
A simple calculation reveals
$$ f(x,\varepsilon) := \frac{1}{\pi}\mathrm{Imag}(m(x+ i\varepsilon)) = \frac{1}{\pi} \int \frac{\varepsilon f(t)\, dt}{(t-x)^2 + \varepsilon^2}.$$
Clearly 
\begin{equation}
1 = \frac{1}{\pi} \int \frac{\varepsilon \, dt}{(t-x)^2 + \varepsilon^2},
\label{integral_identity}
\end{equation}
 
therefore we can define $g$ such that 
$$
f(x,\varepsilon)- f(x) = \frac{1}{\pi} \int \frac{\varepsilon (f(t)-f(x))\, dt}{(t-x)^2 + \varepsilon^2} = \int g(t)\,dt.
$$

We will bound this by breaking it down into two integrals: one near $x$ and another far from $x$. Let $\eta>0$ be the parameter determining the split, to be chosen later. We bound first the integral of $g$ near $x$:
$$
 \left|\int_{|t-x|\le\eta} g(t)\,dt\right| \le \frac{1}{\pi} \int_{|t-x|\le\eta} \frac{\varepsilon \, dt}{(t-x)^2 + \varepsilon^2} \cdot \sup_{t:|t-x|\le\eta} |f(t)-f(x)| \le \omega(\eta,f)
$$

Above we used  \eqref{integral_identity} to upper bound the integral term; and we introduced $\omega$, a modulus of continuity for $f$. This gives a bound for the integral of $g$ near $x$.

Next we bound the integral away from $x$: 
$$ \left|\int_{|t-x|>\eta} g(t)\,dt\right| \le 2|f|_{\infty} \frac{1}{\pi} \int_{|t-x|>\eta} \frac{\varepsilon \, dt}{(t-x)^2 + \varepsilon^2}.$$
The integral term in the upper bound can be evaluated explicitly as $\pi - 2\arctan(\eta/\varepsilon)$, and can be bounded above by $\pi\varepsilon/\eta$.

Putting everything together, we find the bound
$$|f(x,\varepsilon) - f(x)| \le \omega(\eta,f) + \frac{2|f|_{\infty} \varepsilon}{\eta}.$$
Choosing $\eta = \varepsilon^{\alpha}$ with $\alpha \in (0,1)$ yields a bound that tends to zero uniformly over $x$, when $\varepsilon \to 0$. The modulus of continuity tends to zero because $f$ is uniformly continuous. Thus we have shown the desired claim and finished the proof.

Note that, if $f$ is continuously differentiable in the neighbrhood of a point $x$, then we obtain an optimal bound on $|f(x,\varepsilon) - f(x)|$ by taking $\eta = c \varepsilon^{1/2}$. This guarantees $|f(x,\varepsilon) - f(x)| = O(\varepsilon^{1/2})$. The square root scaling in $\varepsilon$ motivates us to work on the line with imaginary part $\delta = \varepsilon^2$ throughout the paper, to get accuracy of order $\varepsilon$.
\end{proof}

The previous results can be put together to prove Theorem \ref{density_approx}.

\begin{proof}[of Theorem \ref{density_approx}]
\label{main_proof}
  We shall show the uniform convergence of the density approximation $\hat f$. By lemma \ref{grid_converge_lemma}, we only need to show the uniform convergence on the grid $x_i(\varepsilon)$ within the support intervals. Let $[l,u]$ be such an interval, let $[\hat l,\hat u]$ be the approximation produced by \textsc{Spectrode} for some $\varepsilon$. Also $x_i(\varepsilon)$ is the uniformly spaced grid $\hat l  = x_0 < x_1 < \ldots < x_M  = \hat u$ of length $M = \lceil\varepsilon^{-1/2}\rceil $. 

Lemma \ref{ODE_approx} is applicable to this grid, because the grid width scales as $\propto \varepsilon^{1/2} \to 0$. By this lemma $\max_j|\hat f(x_j(\varepsilon),\varepsilon) -  \tilde f(x_j(\varepsilon),\varepsilon)| \to 0.$

Further, Lemma \ref{inexact_ode} (specifically bound \eqref{inexact_ode_bound}) applies if $\varepsilon$ is sufficiently small; and if for fixed $\varepsilon$, the accuracy parameter $\eta = \eta(\varepsilon)$ in the fixed-point algorithm is sufficiently small. By bound \eqref{inexact_ode_bound}, and because $c(\varepsilon) \to 0$,  we get $
\max_j|\tilde f(x_j(\varepsilon),\varepsilon) - f(x_j(\varepsilon),\varepsilon)|   \to 0 .
$

On the other hand, $f$ is continuous and compactly supported, hence uniformly continuous. Therefore, by Lemma \ref{stieltjes_bound}: 
$\sup_x|f(x,\varepsilon)-f(x)| \to 0.$
Putting everything together:
$\max_j|\hat f(x_j(\varepsilon),\varepsilon) -f(x_j(\varepsilon))| \to 0.$

This precisely fulfills the hypotheses of Lemma \ref{grid_converge_lemma}, and that lemma gives the desired claim.
\end{proof}
\subsection{Non-atomic measures}
\label{non_atomic}

Our algorithm makes sense for general non-atomic limit PSD-s. Indeed, for general PSD $H$ the ODE \eqref{ode} takes the form
\begin{equation*}
\frac{dv}{dx} = \mathcal{F}(v) := \frac{1}{\frac{1}{v^2} -\gamma\int \frac{t^2\,dH(t)}{(1 + tv)^2}}, \mbox{ } v(x_0) = v_0.
\label{ode_general}
\end{equation*}

This ODE can be implemented and solved efficiently as long as the integral in the denominator is convenient to compute. Lemma \ref{silv_choi} provides the means to find the support, and the fixed-point algorithm  converges to a starting point even at this level of generality. Therefore the ODE approach is more generally applicable than this paper's restriction to atomic measures. 

In our current implementation of \textsc{Spectrode}, we go beyond mixtures of point masses and also allow mixtures of uniform distributions, so $H = \sum_{i=1}^{J} w_i \delta_{t_i} + \sum_{t=1}^{T} w^*_t U_{a_t,b_t}$, where $U_{a,b}$ is a uniform distribution on $[a,b]$. An example was shown in Figure \ref{two_examples}. To efficiently support uniform distributions, we compute in closed form the integrals that appear in the FPA iteration in the function $h(v)$ in algorithm \ref{fp.alg}, and in the ODE. By linearity of the integral, we only need to do the calculation for the individual uniform distributions. We use the formulas:
$$\int_{-\infty}^{\infty} \frac{t\,dU_{a,b}(t)}{1 + tv} = \frac{1}{v} - \frac{\log \frac{bv+1}{av+1}}{(b-a)v^2}, \,\,\,\,\,\, 
\int_{-\infty}^{\infty} \frac{t^2\,dU_{a,b}(t)}{(1 + tv)^2} = \frac{1}{v^2} - \frac{2\log \frac{bv+1}{av+1} + \frac{1}{bv+1}-\frac{1}{av+1}}{(b-a)v^3}.$$


Armed with these, we obtain efficient computation with arbitrary finite mixtures of uniform distributions. 

In addition, a large part of the analysis holds true. Specifically, the convergence of FPA and the analysis of the ODE do not use the atomic structure directly. Instead, the atomic structure of the PSD is used through the structure of the support of the ESD $F$ as a a union of compact intervals; and the behavior of $z'$ characterizing the support (claim 3 of Lemma \ref{silv_choi}).

  To our knowledge, these claims are currently not known to hold for more general PSD. Indeed, in the very recent related work  \cite{hachem2014large} examining the fluctuations of the spectrum at the edges of the support, the authors work conditionally, assuming that the edges are regular in a certain sense. Extending the validity of our algorithm would presumably require developing a better understanding of the support of ESDs for general PSDs. This could be an interesting direction for future research. 


\section{Applications}
\label{Consequences and Extensions}

In this section we apply \textsc{Spectrode} to compute moments of the limit ESD and contour integrals of its Stieltjes transform. 

\subsection{Moments of the ESD}
\label{moments}

The uniform convergence of the approximated density allows us to compute nearly arbitrary moments of the ESD. These moments have many applications, see \cite{tulino2004random,couillet2011random,yao2015large}. 

Obtaining the moments of the ESD is in general difficult.  The polynomial moments $\mathbb{E}_FX^k$ of the ESD can be computed using free probability. However, there appear to be no general rules for calculating more general moments such as $\mathbb{E}_F\log(X)$ or $\mathbb{P}_F(X \le c)$. In contrast, they can be computed conveniently with our method.

\begin{corollary}
Let $\gamma <1$, and $h:\mathbb{R}\to \mathbb{R}$ be bounded on compact intervals and Riemann-integrable on compact intervals. Then the integral of $h$ computed against the density approximation $\hat f(\cdot,\varepsilon)$ produced by \textsc{Spectrode}  converges to the moment of $h$ under the limit ESD $F$: 

$$\lim_{\varepsilon \to 0} \int h(x) \hat f(x,\varepsilon) \, dx  =  \int h(x)  f(x) \, dx. $$

The same holds for $\gamma >1$ if we account for the point mass at $x=0$.
\label{moments_approx}
\end{corollary}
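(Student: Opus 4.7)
The plan is a direct application of Theorem~\ref{density_approx} together with the compact support of both the true density and its approximant. First I would use Theorem~\ref{endpoints_accurate} to construct a fixed compact interval $K \subset \mathbb{R}$ that contains $\mathrm{supp}(F)$ and also contains $\mathrm{supp}(\hat f(\cdot,\varepsilon))$ for all sufficiently small $\varepsilon$. Since $\hat l_k(\varepsilon)\to l_k$ and $\hat u_k(\varepsilon)\to u_k$, one can simply take $K = [0,\, u_K + 1]$ and argue that for $\varepsilon$ below some threshold, every $[\hat l_k(\varepsilon),\hat u_k(\varepsilon)]\subset K$. By construction of \textsc{Spectrode} (Algorithm~\ref{ode_alg}), $\hat f(\cdot,\varepsilon)$ vanishes outside its approximated support, and $f$ vanishes outside $\mathrm{supp}(F)\subset K$, so both integrals reduce to integrals over $K$.

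Next I would write
\begin{equation*}
\int h(x)\hat f(x,\varepsilon)\,dx - \int h(x) f(x)\,dx \;=\; \int_K h(x)\bigl(\hat f(x,\varepsilon) - f(x)\bigr)\,dx,
\end{equation*}
and, using the hypothesis that $h$ is bounded on compact intervals, set $M_h := \sup_{x\in K}|h(x)| < \infty$. Then the triangle inequality and Theorem~\ref{density_approx} yield
\begin{equation*}
\left|\int_K h(x)\bigl(\hat f(x,\varepsilon) - f(x)\bigr)\,dx\right| \;\le\; M_h\cdot |K|\cdot \sup_{x\in K}\bigl|\hat f(x,\varepsilon) - f(x)\bigr|,
\end{equation*}
and the right-hand side tends to zero as $\varepsilon\to 0$. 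Riemann integrability of $h$ on compact intervals is invoked only to guarantee that both $\int h\,\hat f$ and $\int h\,f$ are well defined; it plays no further role in the estimate. This settles the case $\gamma < 1$.

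For the case $\gamma > 1$ mentioned parenthetically, $F$ carries an additional point mass of weight $1-\gamma^{-1}$ at $x=0$, and no density exists there. The plan is to split the expectation as $\int h\,dF = h(0)(1-\gamma^{-1}) + \int_{\mathbb{R}\setminus\{0\}} h(x) f(x)\,dx$, add the corresponding discrete term $h(0)(1-\gamma^{-1})$ to the numerical quantity, and then repeat the previous argument on the compact set $K\setminus \mathcal{I}$ where Theorem~\ref{density_approx} provides uniform convergence (with $\mathcal{I}$ as defined in the footnote accompanying claim~\eqref{compute_density}). I do not anticipate a genuine obstacle here: the result is essentially a packaging of uniform density convergence with the trivial fact that a uniformly small perturbation of a compactly supported function has arbitrarily small integral against any locally bounded integrand.
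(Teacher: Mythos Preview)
Your proposal is correct and follows essentially the same route as the paper: restrict to a compact interval containing both supports, then bound the integrated difference by a constant times $\sup_{x}|\hat f(x,\varepsilon)-f(x)|$ and invoke Theorem~\ref{density_approx}. The only cosmetic differences are that the paper cites Theorem~\ref{density_approx} (rather than Theorem~\ref{endpoints_accurate}) to justify that $\hat f$ vanishes outside $[0,M]$ for small $\varepsilon$, and bounds by $\int_0^M |h(x)|\,dx$ rather than $M_h\cdot|K|$; the paper also dismisses the $\gamma>1$ case as ``completely analogous'' without the explicit decomposition you give.
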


\begin{proof}
Let $M$ be an arbitrary upper bound on the support of $F$. Then for sufficiently small $\varepsilon$, $f$ is zero outside $[0,M]$, and so is $\hat f$ by Theorem \ref{density_approx}. Therefore

$$\left|\int h(x) \left(\hat f(x,\varepsilon) - f(x)\right) \, dx \right| \le \int_{0}^{M} \left| h(x)  \right| \, dx  \sup_{x\in[0,M]}|\hat f(x,\varepsilon) - f(x)| \to 0. $$
The convergence to zero follows because the first term is bounded by the assumptions on $h$, the second term tends to zero by Theorem \ref{density_approx}. The case $\gamma >1$ is completely analogous. 
\end{proof}

It is also possible to prove the convergence of Riemann sums $\sum_i  h(x_i)$ $\hat f(x_i,\varepsilon)$ $\Delta(x_i)$, but this will not be pursued here. 

As an example, in Table \ref{table:moments} we show the results of computing three moments of the standard MP distribution, with $\gamma=1/2$. The three functions are $h(x) = x$, $\log(x)$, and $\log^2(x)$. The true value of the expectation for $x$ is 1, for $\log(x)$ is $\log(2)-1$ (see e.g. \cite{yao2015large}), while for $\log^2(x)$ it is not easily available in the standard references on the subject. The numerical values computed with \textsc{Spectrode}, for a precision parameter $\varepsilon = 10^{-8}$, have a very good accuracy in the case when the true answer is available. In addition, \textsc{Spectrode} also computes the integral of $\log^2(x)$.

\begin{table}[h]
\footnotesize
\centering
\caption{Moments of standard MP law with $\gamma=1/2$.}
\begin{tabular}{|l|l|l|l|}
\hline
Function & True Value & Numerical Value & Accuracy \\ \hline
$x$	 & 1 & 1 & 2.3308e-06\\ \hline 
$\log(x)$	 & -0.30685 & -0.30684 & 1.4483e-05\\ \hline 
$\log^2(x)$	 & unknown & 0.81724 & \\ \hline 

\end{tabular}
\label{table:moments}
\end{table}

\subsection{Contour integrals of the Stieltjes transform of the ESD}
\label{contour}
\textsc{Spectrode} can be adapted to compute contour integrals involving the Stieltjes transform of the limit ESD. Such integrals appear in Bai and Silverstein's central limit theorem for linear spectral statistics of the covariance matrix \cite{bai2004clt}. 

Let $\Gamma$ be a smooth contour in the complex plane that does not intersect the support of the limit ESD $F$. Let $c:[0,1]\to\mathbb{C}$ be a parametrization of the contour, and $v(z)$ be the Stieltjes transform of $\underline F$. Note that $v(z)$ can be defined by the same formulas \eqref{ST}-\eqref{dual.ST} for all $z$ outside the support of $F$, and in particular for all $z \in \Gamma$. 

Suppose that for a smooth function $G$, we want to calculate the following integral over the clockwise oriented contour $\Gamma$:
\begin{equation}
\mathcal{I} = \oint_{\Gamma} G(z,v(z)) \, dz. 
\label{st_int}
\end{equation}

An example is the mean in the CLT for linear spectral statistics $\sum_i g(\lambda_i)$ of $\widehat{\Sigma}$ (for a smooth function $g$), which involves the formula (see \cite{bai2004clt}):

\begin{equation}
\mathcal{J}(g,H,\gamma) =-\frac{1}{2\pi i} \oint_{\Gamma} g(z) \frac{\gamma \int \frac{v(z)^3 t^2 \, dH(t)}{(1 + t v(z))^3}}{\left[1 - \gamma \int \frac{v(z)^2 t^2 \, dH(t)}{(1 + t v(z))^2}\right]^2} \, dz.
\label{clt_mean}
\end{equation}

This is a special case of our general problem \eqref{st_int}. To compute the general integral $\mathcal{I}$, we perform the change of variables $z = c(t)$, and rewrite $\mathcal{I}$ in the form $\mathcal{I} = \int_{t\in [0,1]} G\{c(t),v(c(t))\}  c'(t)\, dt$. We assume $G(\cdot)$, $c(t)$ and $c'(t)$ are conveniently computable. Then the key problem in approximating this integral is obtaining $v(c(t))$ for the entire range $t \in [0,1]$. This is where the ideas used in \textsc{Spectrode} will help.

We will obtain $h(t) := v(c(t))$ by exhibiting an ODE for it. Specifically, by using the chain rule $h'(t) = v'(c(t))c'(t) $, and recalling the ODE \eqref{ode}, which states $v'(z) = \mathcal{F}(v(z))$, we get the new ODE for $h$: $h'(t) =\mathcal{F}(h) c'(t).$

The starting point $h(0)$ (i.e. $v(c(0))$) can again be found using the fixed-point algorithm, provided the starting point of the curve, $c(0)$ has nonzero imaginary part. Indeed, this follows from the general properties of FPA if the imaginary part of $c(0)$ is positive. Moreover, the Stieltjes transform enjoys $\bar v(z) = v(\bar z)$ ($\bar z$ denotes complex conjugation), so FPA also converges for $z$ with negative imaginary part. Now, if the curve $\Gamma$ lies entirely on the real line, then a starting point be obtained using the function \eqref{ST_inverse}, which becomes the explicit inverse of the map $c \to v(c)$, as shown in \cite{silverstein1995analysis}.

The new ODE can be integrated numerically. The obtained values for $\hat h$ can be used to approximate the contour integral $\mathcal{I}$ using standard quadrature methods. 

\subsubsection{Example}

In Table \ref{table:contour} we show an example. Consider the sample covariance matrix $\widehat{{\bf\Sigma}} = n^{-1} \mathbf{X}^\top \mathbf{X}$, where $x_{ij}$ are real random variables with $\mathbb{E}x_{ij}=0$, $\mathbb{E}x_{ij}^2=1$, and $\mathbb{E}x_{ij}^4=3$. Let $\lambda_i$ be the eigenvalues of the sample covariance matrix, and let $F_{\gamma}$ be the standard MP law with index $\gamma$. Consider a sequence of such problems with $n,p \to \infty$, $\gamma_p := p/n \to \gamma$. Let $F_p$ be discrete spectral distribution of $\widehat{{\bf\Sigma}}$. For a function  $g$ that is analytic in a neighborhood of the support of $F_{\gamma}$, let $X_p(g)$ be the linear spectral statistic $X_p(g) := p\left\{ F_p(g) - F_{\gamma_p}(g)\right\} = \sum_{i=1}^p g(\lambda_i) - p\,\mathbb{E}_{F_{\gamma_p}}\left[g(\lambda)\right].$

Then Bai and Silverstein \cite{bai2004clt} proved that $X_p(g)$ is asymptotically normal with mean given in \eqref{clt_mean}, with $H = \delta_1$ and $\Gamma$ an arbitrary contour enclosing the support of the ESD. It is known (see for instance \cite{yao2015large}) that:
\begin{align*}
\mathcal{J}(x,\delta_1,\gamma), \,\,\,\,\,\, 
\mathcal{J}(\log x,\delta_1,\gamma)  = \frac{1}{2} \log(1-\gamma).
\end{align*}

We use our method, outlined above, to compute the integral \eqref{clt_mean}. We take $\gamma=1/2$ and compare against the closed form solutions for $g(x) =x$ and $g(x)=\log(x)$. We also compute the integral for $g(x) = \log^2(x)$, for which no closed form solution appears to be known. The results, displayed below in Table \ref{table:contour}, show the excellent performance of our method. 

\begin{table}[h]
\footnotesize
\centering
\caption{Mean of normalized LSS for identity covariance.}
\begin{tabular}{|l|l|l|l|}
\hline
Linear Statistic & True Value& Numerical Value & Accuracy \\ \hline
$x$	 & 0 & -7.1292e-12 & 7.1292e-12\\ \hline 
$\log(x)$	 & -0.34657 & -0.34655 & 2.2214e-05\\ \hline 
$\log^2(x)$	 & unknown  & 1.2111 & \\ \hline 
\end{tabular}
\label{table:contour}
\end{table}

In this experiment, we used the circle contour $c(t) = a/2 + a/2\cdot e^{2\pi i t}$, with $a = 1.1\cdot(1+\gamma^{1/2})^2$. This contour encloses the support of the ESD. The starting point of the ODE, $v(c(0))=v(a)$, was found using the function $z(v)$ \eqref{ST_inverse}. More specifically, using the default bisection method in \textsc{Matlab}, we found numerically the  unique solution to the equation $z(v)=a$ on the interval $v\in(-1,0)$ such that $z'(v)>0$. As discussed in Section \ref{themethod}, this guarantees the correctness of the starting point.

\section{Related Work}
\label{relatedwork}

Problems related to computing the ESD of covariance matrices have been discussed in several important works. Here we examine the strengths and weaknesses of related and alternative methods.

\subsection{Monte Carlo}

Monte Carlo (MC) simulation can be used to approximate the eigenvalue density of large covariance matrices via a smoothed empirical histogram of eigenvalues. It was proved in \cite{jing2010nonparametric} that this method consistently estimates the ESD. However, we show in a simple simulation that MC is prohibitively slow when more than three digits of accuracy is required. 

\begin{figure}
  \centering
  \includegraphics[scale=0.35]
  {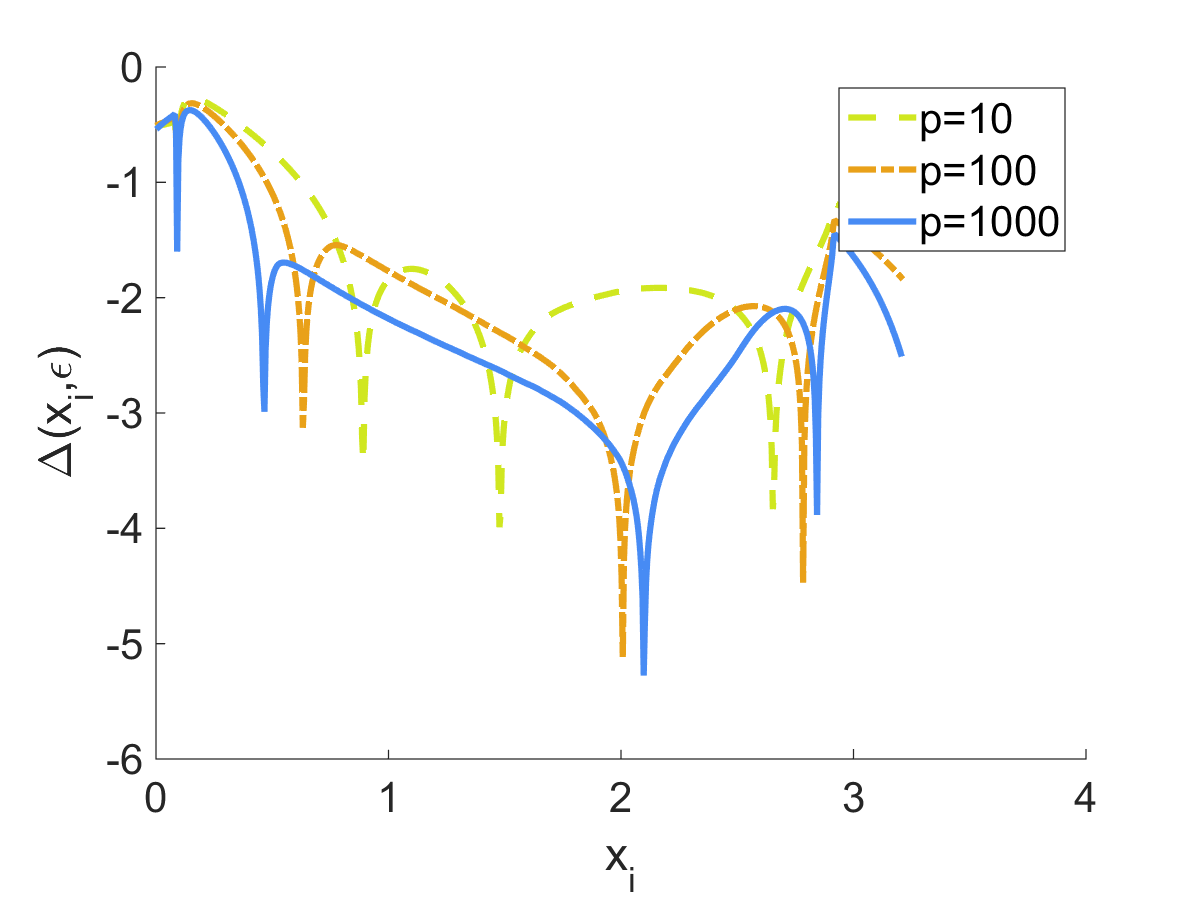}

\caption{Computational accuracy of the Monte Carlo method, discussed in Section \ref{relatedwork}.  Here we sample $n_\mathrm{MC} = 1000$ independent random matrices with iid Gaussian entries and aspect ratio $\gamma = 1/2$. We fit a kernel density estimator to the histogram of eigenvalues of each sample, and average over independent samples.  We display the pointwise error of approximation for $p$ in the range ten, $10^2, 10^3$}
\label{MC_accuracy}
\end{figure}

\subsubsection{Experiment setup and parameters} 

We use the {\tt MP} test problem when the covariance matrix ${\bf\Sigma}_p = I_p$ and $H=\delta_1$. For a specified pair $n,p$ we sample random matrices $\mathbf{X}$ with iid standard normal entries $\mathcal{N}(0,1)$. We compute the empirical eigenvalues of the sample covariance matrix $\widehat{{\bf\Sigma}} = \mathbf{X}^\top\mathbf{X}/n$. We fit a kernel density estimate to the histogram of eigenvalues, using an Epanechnikov kernel with automatically chosen kernel width in \textsc{Matlab}. Finally, the kernel density estimate is averaged over the $n_{\mathrm{MC}}$ independent Monte Carlo trials to get a final estimate $\hat f_{\mathrm{MC}}(x_i)$ of the density. We use the following parameters: $n_{\mathrm{MC}} = 1000$, $\gamma = 1/2$, and $p$ takes the values $10, 10^2, 10^3$.

We compare against the  the true limit $f$ from Eq. \eqref{MP_density} and report the error in the density from Eq. \eqref{error_in_density}:  $\Delta_{\mathrm{MC}}(x_i) = \log_{10}|\hat f_{\mathrm{MC}}(x_{i}) - f(x_{i})|$. 

\subsubsection{Results}
In the results in Figure \ref{MC_accuracy}, we see that number of correct significant digits is two or three. We get only 1/2 extra digit when we move from $p=100$ to $p=1000$.  The experiment for $p=1000$ takes about ten minutes on the same hardware described in Section \ref{timing}. Computing the eigenvalues using the SVD takes $\Theta(p^3)$ steps. For ten times larger $p = 10^4$, such an experiment would take about $10^5$ minutes, or cca. 1600 hours, which is prohibitively slow.

Based on this experiment, we conclude that the MC simulation for computing the ESD, if implemented in a straightforward way, is not suitable for getting more than three digits of accuracy.

\subsection{Method of Successive Approximation}

While Marchenko and Pastur \cite{marchenko1967distribution} do not place emphasis on numerically solving the Marchenko-Pastur equation, they mention that its solution can be found by the method of successive approximation (SA). SA was also proposed by Girko \cite{girko2001theory} for several variants of the Marchenko-Pastur equation. We will argue that SA is not an efficient method for computing the ESD.

Marchenko and Pastur in \cite{marchenko1967distribution} consider a more general model than this paper, allowing for an additive term ${\bf Y} ={\bf A} + n^{-1}{\bf X^\top T X}$. They denote by $\tau(\xi):[0,1] \to \mathbb{R}$ the quantile function of the PSD $H$, which is the limit of the spectrum of $\bf T$; and $c = \lim p/n$, which was called $\gamma$ in our paper. Also, they call $m_0(z)$ the Stieltjes transform of the limit spectrum of $\bf A$, writing the equation for the Stieltjes transform of the ESD of $\bf Y$ in the form (see equation (1.13) in the English translation of their paper):

\begin{equation}
u(z,t) = m_0\left(z - c\int_0^t \frac{\tau(\xi) \, d \xi}{1 + \tau(\xi) u(z,t)}\right).
\label{MP_original}
\end{equation}

The unknown function is $u(z,t)$. They show that, with the initial condition $u(z,0) = m_H(z)$, there is a unique solution $u(z,t)$ analytic in $z \in \mathbb{C}^+$ and continuous in $t \in [0,1]$ of \eqref{MP_original}. Then, $u(z,1)$ is the Stieltjes transform $m_F(z)$ of the limit ESD $F$. 

In our special case $\bf A = 0$, so $m_0(z)=-1/z$ and the equation simplifies to
$u(z,t) = -1/(z - c\int_0^t \frac{\tau(\xi) \, d \xi}{1 + \tau(\xi) u(z,t)})$.

 Denoting $v(z):=u(z,1)$, and switching from the integral over $t\in [0,1]$ to the integral against $dH$, we see that this reduces to the Silverstein equation \eqref{silv.eq}.

The method of successive approximation starts with an arbitrary function $u_0(z,t)$ obeying the smoothness and continuity conditions above. It defines the sequence of functions $u_n(z,t)$ inductively for $n \in \mathbb{N}$ by:

\begin{equation}
u_{n+1}(z,t) = m_0\left(z - c\int_0^t \frac{\tau(\xi) \, d \xi}{1 + \tau(\xi) u_n(z,t)}\right).
\label{MP_iter}
\end{equation}

For this method it is crucial that one maintains bivariate functions $u_n(z,t)$. The definitition of $u_{n+1}$ depends on the integral of $u_n$ against $t$, even if we are interested only in the values $u_{n+1}(z,1)$, for $t=1$. Therefore, the method of successive approximation relies on an additional time dimension. However, this extra dimension seems costly in the special case when $\bf A =0$. There are more efficient methods, such as FPA and \textsc{Spectrode}, that do not rely on additional dimensions.

\subsection{Fixed-Point Algorithm (FPA)}

FPA appears to be the standard approach that most researchers use to compute the ESD or sample covariance matrices in the Marchenko-Pastur asymptotic regime. Versions of FPA have been developed in many different areas, including free probability, wireless communications, signal processing, and mathematical statistics. The existing techniques for analyzing it fall into the following categories: (1) complex analytic method; (2) contraction by deterministic equivalents of random matrices; and (3) interference functions. 

Balinschi and Bercovici in \cite{belinschi2007new} explain subordination results in free probability, with the Marchenko-Pastur-Silverstein equation as a special case. They employ a  complex analytic approach, which involves the study of Denjoy-Wolff points. As a special case of their results, it follows that FPA converges. This was not explicitly stated by the authors, but it is indeed an immediate consequence of their results. 

In many random matrix models, the fixed-point algorithm is often invoked implicitly, to show the uniqueness of the solution to the fixed-point equation governing the spectrum. For instance, this is the approach in the analysis of deterministic equivalents for random matrices in \cite{hachem2007deterministic}. Here the authors show uniqueness of solutions to their equations by exhibiting bounds on the size of successive iterates, which is equivalent to the convergence of the fixed point algorithm in that context. 

In the wireless communications literature, explicit fixed-point algorithms have been developed for computing the ESDs of very general random matrix models in several papers: \cite{couillet2011deterministic, couillet2011random, couillet2012random}. The authors in \cite{couillet2011deterministic} give a fixed point algorithm for a random matrix model that is a sum of arbitrary covariance matrices. They prove its convergence by showing that the iteration is a contraction for complex points $z$ with large $\text{Imag}(z)$, similarly to the earlier work \cite{hachem2007deterministic}.  Then the convergence extends to all points outside the support of the ESD using Vitali's theorem.  

In contrast, the authors in \cite{couillet2012random} take a different approach to proving the convergence of FPA. They show that for negative arguments $z \in (-\infty,0]$, their equations are fixed points of interference functions, and they rely on general convergence results of such functions \cite{yates1995framework}.  Again, Vitali's theorem extends the convergence to other points. In the model of \cite{couillet2012random} the convergence of FPA cannot be established for all complex numbers; which is a counterexample showing that FPA is not expected to converge in all circumstances. However, the interference function approach for proving convergence of FPA is powerful and general, see for instance \cite{Couillet2015random, morales2015large}.

FPA has appeared in other papers as well. Yao in \cite{yao2012note} has a fixed point algorithm for a time series problem, but without convergence guarantees. Hendrikse  and his coauthors \cite{hendrikse2013smooth} discover the fixed-point algorithm for the limit ESD of sample covariance matrices, and claim to prove convergence for the argument $z$ with sufficiently large imaginary part $\text{Imag}(z)>c$, using elementary arguments. However, their arguments appear to be incomplete; as they do not show that the iterates $z_t$ remain in the region $\text{Imag}(z)>c$ for all $t$. 

In a free probability context, Belinschi, Mai and Speicher in the paper \cite{belinschi2013analytic} generalize the fixed-point algorithm to compute the limit ESD of arbitrary polynomials  $P(X_{n1},...,X_{nk})$, given the limit ESD of random matrices $X_{n1},..,X_{nk}$. This important paper has the advantage of generality, and in principle can subsume other methods. As we showed, however, for our problem FPA can unfortunately be slow for high-precision computations.

\subsection{Other Methods}

Special cases of limit ESDs have been computed in a case-by-case fashion. For instance, in the analysis of wireless networks, \cite{Morgenshtern2007Crystallization} develop a computational procedure for a special case that involves a fourth-order polynomial equation. 

Further, Nadakuditi and Edelman in the influential work \cite{rao2008polynomial} developed a polynomial method as a general framework for computing limit spectra of ensembles whose Stieltjes transform is algebraic. As noted by the authors (see their Section 7), these methods in general do not lead to an automated way to compute the limit density. Indeed, this is presented as an open problem in \cite{rao2008polynomial}. \textsc{Spectrode} addresses a narrower setting and shows that the ESD can be computed reliably in that setting.

Olver and Nadakuditi in \cite{olver2012numerical} present an interesting approach for calculating the additive, multiplicative and compressive convolution in free probability. The map from population to sample spectra $H \rightarrow F$ corresponds to free multiplicative convolution with the identity Marchenko-Pastur distribution. However, this method is not generally applicable to our problem. It requires that the support of the LSD $F$ be precisely one compact interval, because it relies on specific series expansions (see their Section 4). In our case of this is often not the case. We see \textsc{Spectrode} as complementary to their method, for the case of multiple intervals in the support.

\section{Software}
\label{software}

A software companion for this paper is available at \url{https://github.com/dobriban/eigenedge}. It contains implementations of

\begin{enumerate}
\item methods to compute the sample spectrum: \textsc{Spectrode} and the fixed-point method
\item methods to compute arbitrary moments and quantiles of the ESD
\item \textsc{Matlab} scripts to reproduce all computational results of this paper
\item detailed documentation with examples 
\end{enumerate}

The package is  user-friendly. Once the appropriate environment is installed, the ESD of a uniform mixture of four point masses at \verb+t = [1; 2; 3; 4]+, and with aspect ratio \verb+gamma = 1/2+ requires three lines of code:

\begin{verbatim}
t =  [1; 2; 3; 4]; 
gamma = 1/2; 
[grid, density] =  spectrode(t, gamma); %compute limit ESD
\end{verbatim}

%

\section*{Acknowledgements}
We are grateful to David L. Donoho for posing the problem and reviewing the manuscript. We are obliged to Romain Couillet for numerous references and comments, especially on FPA.  We thank Iain M. Johnstone, Matthew McKay, Art B. Owen and Jack W. Silverstein for helpful comments; and Tobias Mai for discussions about the paper \cite{belinschi2013analytic}. Financial support has been provided by NSF DMS 1418362.

\printbibliography

\end{document}